\documentclass[letterpaper,11pt]{amsart}

\usepackage[all]{xy}                      %

\CompileMatrices                            

\UseTips                                    

\input xypic
\usepackage[bookmarks=true]{hyperref}       

\usepackage{amssymb,latexsym,amsmath,amscd}
\usepackage{xspace}
\usepackage{color}
\usepackage{kpfonts}
\usepackage{graphicx}
\usepackage{dsfont}

\reversemarginpar

\vfuzz2pt 
\hfuzz2pt 


\theoremstyle{plain}
\newtheorem{theorem}{Theorem}[section]
\newtheorem*{theorem*}{Theorem}
\newtheorem{proposition}[theorem]{Proposition}
\newtheorem{corollary}[theorem]{Corollary}
\newtheorem{lemma}[theorem]{Lemma}

\theoremstyle{definition}
\newtheorem{definition}[theorem]{Definition}

\newtheorem{remark}[theorem]{Remark}
\newtheorem{example}[theorem]{Example}

\newcommand{\enm}[1]{\ensuremath{#1}}          %
\newcommand{\op}[1]{\operatorname{#1}}
\newcommand{\cal}[1]{\mathcal{#1}}

\newcommand{\ZZ}{\enm{\mathbb{Z}}}

\newcommand{\PP}{\enm{\mathbb{P}}}

\newcommand{\Aa}{\enm{\cal{A}}}

\newcommand{\Ee}{\enm{\cal{E}}}
\newcommand{\Ff}{\enm{\cal{F}}}
\newcommand{\Gg}{\enm{\cal{G}}}

\newcommand{\Ii}{\enm{\cal{I}}}

\newcommand{\Kk}{\enm{\cal{K}}}
\newcommand{\Ll}{\enm{\cal{L}}}

\newcommand{\Oo}{\enm{\cal{O}}}

\renewcommand{\phi}{\varphi}
\renewcommand{\theta}{\vartheta}
\renewcommand{\epsilon}{\varepsilon}


\newcommand{\Pic}{\op{Pic}}

\newcommand{\Ext}{\op{Ext}}


      %

\renewcommand{\to}[1][]{\xrightarrow{\ #1\ }}







\newcommand{\old}[1]{}


\begin{document}

\title[aCM sheaves of pure rank two on reducible hyperquadrics]{aCM sheaves of pure rank two \\on reducible hyperquadrics}

\author{Edoardo Ballico, Sukmoon Huh and Joan Pons-Llopis}

\address{Universit\`a di Trento, 38123 Povo (TN), Italy}
\email{edoardo.ballico@unitn.it}

\address{Sungkyunkwan University, Suwon 440-746, Korea}
\email{sukmoonh@skku.edu}

\address{Department of Mathematics, Kyoto University, Kyoto, Japan}
\email{ponsllopis@gmail.com}

\keywords{arithmetically Cohen-Macaulay sheaf, reducible quadric hypersurface, wild type}
\thanks{The first author is partially supported by MIUR and GNSAGA of INDAM (Italy). The second author is supported by Basic Science Research Program 2015-037157 through NRF funded by MEST and the National Research Foundation of Korea(KRF) 2016R1A5A1008055 grant funded by the Korea government(MSIP). The third author is supported by a FY2015 JSPS Postdoctoral Fellowship}

\subjclass[2010]{Primary: {14F05}; Secondary: {13C14, 16G60}}

\begin{abstract}
We classify a special type of arithmetically Cohen-Macaulay sheaves of rank two on reducible and reduced quadric hypersurfaces. As a consequence we show that a reducible and reduced quadric surface is of wild type.
\end{abstract}

\maketitle


\section{Introduction}
The Horrocks theorem states that a vector bundle $\Ee$ on the projective space $\PP^n$ splits as the sum of line bundles if and only if $\Ee$ has {\it no intermediate cohomology}, that is, $H^i(\PP^n, \Ee(t))=0$ for any $t\in \ZZ$ and $i=1,\ldots, n-1$. It has stimulated the study of {arithmetically Cohen-Macaulay} (for short, aCM) vector bundles that are bundles with no intermediate cohomology supported on a given projective variety $X$, because in the algebraic context these bundles correspond to maximal Cohen-Macaulay modules over the associated graded ring under the assumption that $X$ is aCM, i.e. its associated graded ring is Cohen-Macaulay.

The classification of aCM vector bundles has been done, partially in some cases, for several projective varieties such as smooth quadric hypersurfaces \cite{Kapranov, Knorrer}, cubic surfaces \cite{CH}, Fano threefolds \cite{Madonna} and others. In \cite{DG} a classification of aCM varieties was proposed as {\it finite, tame or wild} representation type according to the complexity of their associated category of aCM vector bundles and there are several contributions to this trichotomy such as \cite{EH,BGS,CMP,FM}. Recently this problem is solved in \cite{FP}, except when the varieties are cones.

In this article we investigate aCM sheaves of rank up to two on a reducible and reduced quadric hypersurfaces $X_n$ in $\PP^{n+1}$ with $n\ge 2$, which are a particular case of cones. Notice that the one-dimensional case, namely the union of two lines $C:=L_1\cup L_2\subset\PP^2$ is already understood: indeed, in \cite{ESW}, it is explained how to construct Ulrich sheaves on $C$; on the other hand, in \cite{DG} it was provided a complete classification of aCM sheaves on $C$. Our first main result is on $X=X_2$ that is a union of two distinct planes $H_i\subset \PP^3$, $i=1,2$ whose intersection is a line $L$:

\begin{theorem}
Any aCM sheaf of pure rank one on $X$ is an extension of a line bundle on one plane by another line bundle on the other plane.
\end{theorem}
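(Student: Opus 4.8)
Let $X = H_1 \cup H_2 \subset \PP^3$ with $H_1 \cap H_2 = L$ a line, and let $\Ee$ be an aCM sheaf of pure rank one on $X$. Write $\Ee_i := \Ee|_{H_i}$ modulo torsion; since $\Ee$ has pure rank one, each $\Ee_i$ is a rank-one torsion-free, hence reflexive, sheaf on the smooth surface $H_i \cong \PP^2$, so $\Ee_i$ is a line bundle $\Oo_{H_i}(a_i)$. The natural restriction maps assemble into an exact sequence
\begin{equation*}
0 \to \Ee' \to \Ee \to \Ee_1 \to 0,
\end{equation*}
where $\Ee'$ is the subsheaf of sections vanishing on $H_1$, so $\Ee'$ is a sheaf on $H_2$ supported (as a rank-one object) generically on $H_2$; by purity $\Ee'$ is torsion-free of rank one on $H_2$, hence $\Ee' \cong \Oo_{H_2}(b)$ for some $b$. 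The goal is to show this sequence already realizes $\Ee$ as an extension of a line bundle on one plane by a line bundle on the other, i.e. that $\Ee' = \Oo_{H_2}(a_2)$ with no twist down along $L$, equivalently that the map $\Ee \to \Ee_1$ is surjective with kernel a genuine line bundle on $H_2$ rather than a proper subsheaf.

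First I would set up the two Koszul-type sequences on $X$: restriction to $H_1$ gives $0 \to \Ii_{H_1/X} \to \Oo_X \to \Oo_{H_1} \to 0$ with $\Ii_{H_1/X} \cong \Oo_{H_2}(-1)$ (the ideal of $H_1$ in $X$ is cut by the linear form defining $H_1$, which on $H_2$ vanishes along $L$). Tensoring with $\Ee$ and chasing torsion, the kernel sheaf $\Ee'$ fits in
\begin{equation*}
0 \to \Ee' \to \Ee \to \Ee_1 \to 0, \qquad \Ee' \hookrightarrow \Ee \tensor \Oo_{H_2}(-1) \text{-ish}.
\end{equation*}
Then I would compute intermediate cohomology. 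The aCM hypothesis on $\Ee$ together with the long exact sequences forces $H^1(H_i, \Ee_i(t)) = 0$ for all $t$ — which is automatic since line bundles on $\PP^2$ have no $H^1$ — but more importantly it controls the connecting maps $H^0(H_1, \Ee_1(t)) \to H^1(H_2, \Ee'(t))$. Since $\Ee' \cong \Oo_{H_2}(b)$ also has vanishing $H^1$, the middle-cohomology constraint becomes a condition forcing the restriction sequence to be as simple as possible along $L$; concretely, the gluing datum of $\Ee$ along $L$ is an isomorphism $\Ee_1|_L \cong \Ee_2|_L$ up to the torsion contributions, and aCM-ness rules out any drop, giving $b = a_2$.

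The main obstacle, I expect, is controlling the behavior along the intersection line $L$: a priori $\Ee$ could fail to be locally free at points of $L$, and the two restrictions $\Ee_1$ and $\Ee_2$ need not glue to the naive fibered product $\Oo_{H_1}(a_1) \times_{\Oo_L} \Oo_{H_2}(a_2)$ — there could be torsion supported on $L$ or a ``twist'' where the local structure at $L$ is $\Oo_X$-module-theoretically nontrivial. To handle this I would localize at the generic point of $L$, where $\Oo_{X,L}$ is a one-dimensional local ring with two minimal primes, reduce the classification of rank-one aCM modules to a local computation (this is where the $n=1$ picture from \cite{DG} for $L_1 \cup L_2 \subset \PP^2$ re-enters as a toy model), and show the only pure rank-one possibilities are the ``honest'' extensions. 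Combining the local analysis along $L$ with the global vanishing of $H^1$ on each $\PP^2$ component then pins down $\Ee$ as the stated extension; the extension class lives in $\Ext^1_X(\Oo_{H_1}(a_1), \Oo_{H_2}(a_2))$, which one computes via the sequence for $\Oo_{H_1}$ to be nonzero exactly when the sheaf is locally free at $L$, recovering both the split and non-split cases.
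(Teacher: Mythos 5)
Your overall strategy --- restrict to the two components, identify the torsion-free quotients as line bundles, and use the aCM hypothesis to control the gluing along $L$ --- is a reasonable alternative to the paper's argument, but as written it has two genuine gaps, and they sit exactly where the content of the theorem lies. First, a torsion-free sheaf of rank one on a smooth surface is \emph{not} automatically reflexive: the general such sheaf on $\PP^2$ is $\Ii_Z(a)$ for $Z$ a zero-dimensional scheme, and $\Ii_Z(a)$ is a line bundle only when $Z=\emptyset$. So the assertions ``$\Ee_i$ is torsion-free of rank one, hence reflexive, hence a line bundle'' and ``by purity $\Ee'$ is torsion-free of rank one on $H_2$, hence $\Oo_{H_2}(b)$'' are unjustified; moreover $\Ee'$, being only a subsheaf of a depth-$2$ sheaf, need not have depth $2$ (compare $\Ii_p\subset\Oo_{\PP^2}$), so no reflexivity argument is available for it either. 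That these quotients are line bundles is precisely what the aCM hypothesis must be used to prove: for instance $\Ii_p$ with $p\in L$ is a pure rank-one sheaf on $X$ sitting in $0\to\Oo_{H_2}(-1)\to\Ii_p\to\Ii_{p,H_1}\to 0$ with $\Ii_{p,H_1}$ not locally free, and it is exactly the non-aCM example that your argument, as stated, does not exclude.

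Second, the mechanism you propose for bringing in aCM-ness --- localizing at the \emph{generic} point of $L$ and reducing to the one-dimensional local ring with two minimal primes --- cannot see the obstruction, because the bad configurations are zero-dimensional schemes supported at \emph{special} points of $L$, which vanish at the generic point. The paper's proof handles this globally: after normalizing $\Ee$ to be $0$-regular but not $(-1)$-regular, Serre duality ($\omega_X\cong\Oo_X(-2)$) produces a nonzero map $u:\Ee\to\Oo_X(1)$ with image $\Ii_A(1)$ globally generated; the possible $A$ are then listed, and the cases $A=\{p\}$ with $p\in L$ and $A$ a length-two scheme meeting $L$ are ruled out by the explicit computation $h^1(\Ii_A(t))>0$ for $t<-1$, contradicting aCM. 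Some such global cohomological exclusion of zero-dimensional loci on $L$ is unavoidable, and your sketch does not contain it. To repair your approach you would need to (i) prove that $\Ee'$ is an $\Oo_{H_2}$-module and that both $\Ee'$ and $\Ee_1$ are of the form $\Ii_Z(a)$, and then (ii) use the vanishing of $H^1(\Ee(t))$ for all $t$, fed through the long exact sequence of $0\to\Ee'\to\Ee\to\Ee_1\to 0$, to force $Z=\emptyset$ in both --- which in effect reproduces the paper's case analysis.
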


Then we consider the case of aCM sheaves of rank two. Indeed, in Proposition \ref{s+1} aCM bundles of rank two turn out to be given as kernel sheaves of simple type, and so it is natural to classify aCM kernel sheaves of simple type with rank two.

\begin{theorem}\label{main}
Let $\Ee$ be an aCM kernel sheaf of simple type with pure rank two on $X$. Up to twist it is one of the following:
\begin{itemize}
\item a direct sum of two line bundles;
\item an extension of a twisted ideal sheaf $\Ii_p(1)$ of a point $p\not\in L$ by $\Oo_X$, which is locally free;
\item a sheaf whose restriction to each component of $X$ satisfies
\begin{itemize}
\item[(i)] $\Ee_{|H_i} \cong \Oo_{H_i}(c)\oplus \Oo_{H_i}$ and
\item[(ii)] $0\to \Oo_{H_j}(k) \to \Ee_{|H_j} \to \Ii_{Z, H_j}(c-k) \to 0$
\end{itemize}
for an integer $c$ at least $2$ and $\{i,j\}=\{1,2\}$, where $Z\subset L$ is a zero-dimensional subscheme with $k=|Z|$ such that $0\le k<c\le 2k+2$.
\end{itemize}
\end{theorem}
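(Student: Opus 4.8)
The plan is to exploit the exact sequence relating $\Ee$ to its restrictions on the two planes $H_1,H_2$ and the intersection line $L$. Since $\Ee$ is an aCM kernel sheaf of simple type with pure rank two, restricting to each component $H_i$ produces a rank-two sheaf on $\PP^2$ which, after saturating, is an extension of a twisted ideal sheaf of a zero-dimensional scheme by a line bundle; aCM-ness on $X$ forces aCM-ness (hence, on $\PP^2$, splitting or such an extension) of the restrictions, together with compatibility along $L$. First I would set up the Mayer--Vietoris type sequence
\begin{equation*}
0 \to \Ee \to \Ee_{|H_1}\oplus \Ee_{|H_2} \to \Ee_{|L} \to 0,
\end{equation*}
and analyze $\Ee_{|L}$, a rank-two (generically) sheaf on $L\cong\PP^1$, which splits as a sum of line bundles $\Oo_L(a_1)\oplus \Oo_L(a_2)$ up to torsion; the torsion will be supported on the zero-dimensional schemes $Z$.

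The key steps, in order, are: (1) use Proposition \ref{s+1} and the hypothesis of simple type to reduce to the case where each $\Ee_{|H_i}$ is, up to saturation, either a split bundle $\Oo_{H_i}(a_i)\oplus\Oo_{H_i}(b_i)$ or a nonsplit extension $0\to \Oo_{H_i}(k_i)\to \Ee_{|H_i}\to \Ii_{Z_i,H_i}(c_i-k_i)\to 0$ with $Z_i$ zero-dimensional; (2) impose the aCM condition on $X$, which via the Mayer--Vietoris sequence and the known cohomology of line bundles and twisted ideal sheaves on $\PP^2$ and $\PP^1$ translates into vanishing conditions that pin down the twists and force the schemes $Z_i$ to lie on $L$ (otherwise intermediate cohomology appears); (3) normalize by a twist so that one component, say $\Ee_{|H_i}$, is $\Oo_{H_i}(c)\oplus\Oo_{H_i}$ with $c\ge 0$, and deduce from the gluing along $L$ that the other component is an extension as in (ii) with $k=|Z|$ and $Z\subset L$; (4) extract the numerical inequalities $0\le k<c\le 2k+2$ from the requirement that the extension on $H_j$ be aCM and nonsplit together with the constraint that $\Ee$ has no sections forcing a further splitting; the bound $c\le 2k+2$ should come from the Castelnuovo--Mumford type regularity of $\Ii_{Z,H_j}$ (a zero-dimensional subscheme of a line has $k$ points so $\Ii_{Z,L}(k-1)$ is globally generated), while $k<c$ is nonsplitness and $0\le k$ is triviality of $Z$ being allowed; (5) finally separate off the two degenerate cases — when both restrictions split we land in the direct sum of two line bundles, and when $Z$ is empty but the extension is nontrivial on the whole of $X$ (not on a single plane), a careful look shows this is exactly the locally free extension of $\Ii_p(1)$ by $\Oo_X$ for a point $p\notin L$, using that such an extension class lives in $\Ext^1_X(\Ii_p(1),\Oo_X)$ and restricts compatibly.

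The main obstacle I expect is step (2)–(3): controlling how the two restrictions are glued along $L$ and showing that \emph{every} aCM kernel sheaf of simple type arises from such a gluing, i.e. that there are no exotic sheaves whose restrictions individually look fine but which fail to be aCM globally, or conversely that the compatibility datum along $L$ is rigid enough to force $Z\subset L$ and the stated normal form. Concretely, one must show that a twisted ideal sheaf $\Ii_{Z_i,H_i}$ with $Z_i\not\subset L$ cannot be glued to anything on $H_j$ to produce an aCM sheaf on $X$ — this is where the reducibility of $X$ and the precise position of $L$ enter, and it requires a delicate cohomology chase through the Mayer--Vietoris sequence twisted by all $t\in\ZZ$. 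Once the gluing is understood, verifying the numerical bounds $0\le k<c\le 2k+2$ and checking that each case in the list is genuinely aCM (the converse direction) should be a routine, if slightly lengthy, computation using the known cohomology tables on $\PP^2$ and $\PP^1$.
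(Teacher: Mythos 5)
Your overall skeleton --- normalize via the Mayer--Vietoris/kernel sequence so that one restriction is $\Oo_{H_i}(c)\oplus\Oo_{H_i}$ with $\Ee_{|L}\cong\Oo_L(c)\oplus\Oo_L$, $c\ge 0$, then present the other restriction as an extension of a twisted ideal sheaf by the line bundle of maximal twist admitting a section --- is indeed the paper's strategy. But there are two genuine gaps. First, your step (2)/(4) claims that aCM-ness forces the zero-dimensional scheme $Z$ to lie on $L$, and your step (5) attributes the second bullet (the locally free extension of $\Ii_p(1)$ by $\Oo_X$) to the case ``$Z$ empty but the extension nontrivial on all of $X$''. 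Both statements are false: that bundle restricts on one plane to $TH_i(-1)$, which sits in $0\to\Oo_{H_i}\to TH_i(-1)\to\Ii_{p,H_i}(1)\to 0$ with $Z=\{p\}$ a \emph{single point off $L$}, so it is an aCM sheaf with $Z\ne\emptyset$ and $Z\not\subset L$. The correct mechanism is a case division on $c$: for $c=0$ one gets $\Oo_X^{\oplus 2}$; for $c=1$ the nonsplit restriction is a stable bundle on $\PP^2$ with $(c_1,c_2)=(1,1)$, hence $T\PP^2(-1)$ by Van de Ven, which produces exactly the second bullet; only for $c\ge2$ does one prove $\deg(Z)=c-k$ and $Z\subset L$. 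Your proposed dichotomy, taken literally, would miss the Ulrich bundle entirely.

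Second, the numerical bound $c\le 2k+2$ is not obtained from regularity of $\Ii_{Z,L}$ as you suggest (and note that the relevant degree is $\deg(Z)=c-k$, not $k$). What actually does the work is a two-sided degree estimate: the aCM hypothesis, fed through the computation $h^1(\Ee(t))=\dim\ker\bigl(H^1(\Ee_2(t))\to H^1(\Ee_{2|L}(t))\bigr)$ and Serre duality, yields $h^1(\Ee_2(t))\le -1-t$ for $-c-1\le t\le -2$ and vanishing elsewhere; combined with the residual exact sequence $0\to\Ii_{\mathrm{Res}_L(Z)}(t-1)\to\Ii_Z(t)\to\Ii_{Z\cap L}(t)\to 0$ this gives a lower bound $\deg(Z)\ge\binom{c-2k}{2}+c-k$ and an upper bound $\deg(Z)\le\binom{c-2k-1}{2}+k+4$ when $c>2k+2$, which are incompatible. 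A separate argument is also needed to exclude $k=0$ with $c\ge2$ (comparing $\deg(Z)\ge\binom{c+1}{2}$ with $\deg(Z)\le 1+\binom{c}{2}$), a case your outline does not address. Without these quantitative steps the inequalities in the statement are asserted rather than proved.
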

Then concerning the trichotomy classification of aCM varieties, we use the classification of aCM sheaves of rank one and two on $X$ to obtain:

\begin{corollary}
$X$ is of wild type in a very strong sense, that is, there are arbitrarily large dimensional families of pairwise non-isomorphic aCM sheaves of rank one and two on $X$.
\end{corollary}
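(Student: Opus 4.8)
The plan is to exhibit, for each sufficiently large $N$, an $N$-dimensional family of pairwise non-isomorphic aCM sheaves on $X$, using the classifications in Theorem 1 and Theorem \ref{main} as a source of moduli. The cleanest source is the third item of Theorem \ref{main}: fix an integer $c\ge 2$ and an integer $k$ with $0\le k<c\le 2k+2$, fix the index assignment $\{i,j\}=\{1,2\}$ and the line bundle $\Oo_{H_j}(k)$, and let the zero-dimensional subscheme $Z\subset L$ of length $k$ vary together with the extension class in the relevant $\Ext^1$. Since $L\cong\PP^1$, the Hilbert scheme of length-$k$ subschemes of $L$ is $\PP^k$, which is $k$-dimensional; and for fixed $Z$ the extensions of $\Ii_{Z,H_j}(c-k)$ by $\Oo_{H_j}(k)$ on one plane, glued to the split sheaf $\Oo_{H_i}(c)\oplus\Oo_{H_i}$ on the other along $L$, are parametrized by a vector space whose dimension grows with $c$. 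Letting $c\to\infty$ (with $k=\lceil (c-2)/2\rceil$, say, so that the constraint $c\le 2k+2$ holds) produces families of unbounded dimension.

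First I would pin down the two moduli directions precisely. For the subscheme $Z$: the inclusion $L\subset H_j$ identifies length-$k$ subschemes of $L$ with points of $\PP(H^0(\Oo_L(k)))\cong\PP^k$, and distinct $Z$ give non-isomorphic $\Ii_{Z,H_j}(c-k)$, hence a priori non-isomorphic $\Ee$. For the extension direction: compute $\Ext^1_{H_j}(\Ii_{Z,H_j}(c-k),\Oo_{H_j}(k))$ on the plane $H_j\cong\PP^2$; using the structure sequence $0\to\Ii_{Z,H_j}\to\Oo_{H_j}\to\Oo_Z\to 0$ this $\Ext^1$ is computed from $H^0$ and $H^1$ of twists of $\Oo_{H_j}$ and $\Oo_Z$, and its dimension is a polynomial in $c$ of positive degree (roughly $\binom{c-k+2}{2}$ up to lower order and the length-$k$ correction). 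One then checks that non-proportional extension classes yield non-isomorphic aCM sheaves; here I would invoke the simple-type/kernel-sheaf rigidity already established in Proposition \ref{s+1} and in the proof of Theorem \ref{main}, so that an isomorphism of total sheaves restricts to isomorphisms of the pieces on $H_i$ and $H_j$ compatible with the gluing along $L$, forcing the extension classes to agree up to scalar and automorphisms of the fixed summands. The finite-dimensional automorphism group of the rigid data $\Oo_{H_j}(k)\oplus$(twist) acts with bounded-dimensional orbits, so the quotient still has dimension tending to infinity.

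The final assembly is a dimension count: the family over $\PP^k\times\PP(\Ext^1)$ (minus the gluing ambiguity along $L$, which only removes a bounded-dimensional group action) has dimension $k+\dim\Ext^1-O(1)$, and with the choice $k\sim c/2$ this is $\sim c/2+\binom{c/2+2}{2}-O(1)\to\infty$ as $c\to\infty$. Each member is aCM of rank two by Theorem \ref{main}, and I would also record the rank-one families coming from Theorem 1 (extensions of a line bundle on one plane by a line bundle on the other are parametrized by $\Ext^1$ whose dimension again grows with the twists), so that the statement about both rank one and rank two holds. I expect the main obstacle to be the non-isomorphism argument: ensuring that two different pairs $(Z,e)$ and $(Z',e')$ really give non-isomorphic sheaves requires controlling all isomorphisms of the glued sheaf, i.e. showing the splitting into $H_i$- and $H_j$-parts and the torsion filtration along $L$ are canonical enough that $\Aut(\Ee)$ is small; this is exactly where the simple-type hypothesis and the rank-two kernel-sheaf analysis of the previous sections must be leveraged carefully, rather than the dimension count, which is routine once the parameter space is correctly identified.
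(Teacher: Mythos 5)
Your overall strategy coincides with the paper's (this corollary is the informal version of Theorem \ref{i4}): take the sheaves of Example \ref{yy1} and let the degree of $Z\subset L$ grow. But the step you yourself flag as ``the main obstacle'' --- that distinct parameters give pairwise non-isomorphic sheaves --- is exactly the step you leave open, and it is where the paper makes a specific numerical choice that your sketch misses. The paper restricts to $c\le 2k$, so that $h^0(\Gg _{c,k,Z}(-k))=1$; the unique section of $\Gg _{c,k,Z}(-k)$ then recovers the exact sequence (\ref{eqyy1}) and hence $Z$ from $\Gg _{c,k,Z}$, and since $\Ee _{|H_2}^{\circ}\cong \Gg _{c,k,Z}$ is canonically attached to $\Ee$ (it is the torsion-free quotient of the restriction), the sheaf $\Ee$ determines $Z$. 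This alone gives a family of dimension $\deg (Z)=c-k>m$ (the degree-$(c-k)$ subschemes of $L\cong \PP^1$ form a $\PP^{c-k}$), with no need to bring in extension classes or to ``control all isomorphisms of the glued sheaf.'' Note also that your proposed choice $k=\lceil (c-2)/2\rceil$ lands, for even $c$, on the boundary $c=2k+2$, which is precisely the degenerate case: the argument in Proposition \ref{i4.00} shows that then $h^0(\Ee _2(-k-1))=1$, so the section used to recover $Z$ is not the one of maximal twist and the uniqueness fails. Take $c=2k$ instead.

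Two further quantitative slips. First, $\dim \Ext ^1_{H_2}(\Ii _{Z}(c-k),\Oo _{H_2}(k))=\deg (Z)=c-k$ (by Serre duality it equals $h^1(\Ii _Z(c-2k-3))$, and for $c\le 2k+2$ all the relevant twists of $\Oo _{H_2}$ have vanishing $h^1$ and $h^2$), not $\sim \binom{c-k+2}{2}$; so your second source of moduli is miscounted, and in any case using it obliges you to show that non-proportional classes yield non-isomorphic middle terms, which again comes down to $h^0(\Gg(-k))=1$. Second, in Example \ref{yy1} the convention is $\deg (Z)=c-k$ (the statement $k=|Z|$ in the introduction is consistent with this only when $c=2k$), so the parameter space is $\PP^{c-k}$ rather than $\PP^{k}$. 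Your observation that the rank-one extensions of Proposition \ref{propp} also form families of unbounded dimension is correct and is a legitimate, easier, alternative source of wildness, although the paper's formal statement is phrased for rank two.
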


Note that a smooth quadric surface is of finite type, while a quadric surface with a unique singular point is CM countable, i.e. there are only countably many isomorphism classes of indecomposable aCM sheaves; see \cite{BGS}.

Then we turn our interest on aCM sheaves of rank two on a higher dimensional quadric hypersurface, and we get the following conclusion using the results on $X$.

\begin{theorem}
If $\Ee$ is an aCM kernel sheaf of simple type with pure rank two on $X_n$ with $n\ge 3$, then it is decomposable.
\end{theorem}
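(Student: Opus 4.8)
The plan is to run the same analysis that yields Theorem~\ref{main} for $X=X_2$, exploiting the fact that when $n\ge 3$ the intersection $L=H_1\intsec H_2\cong\PP^{n-1}$ and the components $H_i\cong\PP^n$ are cohomologically so poor that every non-split configuration appearing there collapses. Write $X_n=H_1\union H_2$ with $H_i\cong\PP^n$ and $L=H_1\intsec H_2\cong\PP^{n-1}$, let $\ell_i$ be the linear form defining $H_i$, and recall the exact sequence $0\to\Oo_{H_2}(-1)\to\Oo_{X_n}\to\Oo_{H_1}\to 0$ and its mirror.

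First I would tensor these sequences with $\Ee$ and use the hypothesis that $\Ee$ is an aCM kernel sheaf of simple type of pure rank two to keep track of the (necessarily $L$-supported) torsion of the restrictions $\Ee_{|H_i}$, obtaining exact sequences relating $\Ee$, $\Ee_{|H_1}$, $\Ee_{|H_2}$ and their restrictions to $L$. Feeding the vanishing $H^i(X_n,\Ee(t))=0$ for $1\le i\le n-1$ into the resulting cohomology sequences, together with $H^i(\PP^n,\Oo(t))=0$ for $0<i<n$ and $H^i(\PP^{n-1},\Oo(t))=0$ for $0<i<n-1$, I would show that, exactly as in the proof of Theorem~\ref{main}, $\Ee_{|H_i}$ must be --- modulo the torsion it may carry along $L$ --- a direct sum of two line bundles, an extension $0\to\Oo_{H_i}(k)\to\Ee_{|H_i}\to\Ii_{Z,H_i}(m)\to 0$ with $Z$ zero-dimensional supported on $L$, or the locally free extension of $\Ii_{p,H_i}(1)$ by $\Oo_{H_i}$ with $p\notin L$.

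The heart of the matter is then to rule out the indecomposable candidates. For the locally free one this is a direct computation: since $p\notin L$ is a smooth point of $X_n$, one has $\Ext^i_{X_n}(\Oo_p,\Oo_{X_n})=0$ for $i\le 2$ when $n\ge 3$, so that $\Ext^1_{X_n}(\Ii_{p,X_n}(1),\Oo_{X_n})\cong\Ext^1_{X_n}(\Oo_{X_n}(1),\Oo_{X_n})=H^1(X_n,\Oo_{X_n}(-1))=0$ because $\Oo_{X_n}$ is aCM, and hence any such $\Ee$ splits as $\Oo_{X_n}\dirsum\Ii_{p,X_n}(1)$. For the remaining candidates the non-triviality of $\Ee$, just as in Theorem~\ref{main}, is carried entirely by the gluing datum along $L$, which lives in groups built from $\Hom$ and $\Ext^1$ of line bundles and ideal sheaves on $L\cong\PP^{n-1}$; the sequence $0\to\Ii_{Z,L}\to\Oo_L\to\Oo_Z\to 0$ together with $\Ext^i_{\PP^{n-1}}(\Oo_Z,-)=0$ for $i<n-1$ reduces every such $\Ext^1$ to one between line bundles, and $\Ext^1_{\PP^{n-1}}(\Oo_L(a),\Oo_L(b))=H^1(\PP^{n-1},\Oo(b-a))=0$ precisely because $n-1\ge 2$. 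This is the one place where $n\ge 3$ genuinely improves on $n=2$: over $L\cong\PP^1$ these $H^1$'s are nonzero in a range of twists, and that nonvanishing is exactly what produces the last family in Theorem~\ref{main}. With the obstruction gone, the gluing can be diagonalized (using the surjectivity of restriction of sections from $\PP^n$ to $\PP^{n-1}$), $Z$ must be empty, and $\Ee$ splits as a direct sum of rank-one aCM sheaves on $X_n$; in particular it is decomposable.

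The step I expect to be the main obstacle is the first reduction: writing down the correct Mayer--Vietoris-type sequences for a sheaf that is torsion-free on $X_n$ while its restrictions to the components are not, and controlling that $L$-supported torsion precisely enough that the global aCM vanishing really does translate into the vanishing statements on $H_i$ and on $L$ that the rest of the argument consumes. Once this bookkeeping is done, the dimension hypothesis kills the wild examples almost for free.
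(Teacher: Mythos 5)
Your overall strategy --- classify the restrictions to the two components and then kill the indecomposable candidates by a cohomological vanishing that only holds for $n\ge 3$ --- is reasonable in outline, but the key vanishing is located in the wrong place, and the first reduction is not available ``exactly as in'' the surface case. The indecomposable sheaves of Theorem \ref{i1} are \emph{not} produced by gluing data along $L$: in Example \ref{yy1} the sheaf is nontrivial because its restriction to $H_2$ is the non-split Serre extension $0\to\Oo_{H_2}(k)\to\Gg\to\Ii_{Z,H_2}(c-k)\to 0$, whose class lives in $\Ext^1_{\PP^2}(\Ii_{Z,H_2}(c-k),\Oo_{H_2}(k))$ and is nonzero by Cayley--Bacharach on the \emph{plane} $H_2$, not because $H^1(\PP^1,\Oo(t))\ne 0$. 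Accordingly, the group whose vanishing one needs for $n\ge 3$ is $\Ext^1_{\PP^n}(\Ii_{Z}(a),\Oo(b))$ for $Z$ zero-dimensional in $H_i\cong\PP^n$ (equivalently: the zero scheme of a section of a rank-two bundle on $\PP^n$ has codimension at most two, hence cannot be a nonempty finite set when $n\ge 3$), not an $\Ext^1$ on $L\cong\PP^{n-1}$. Your computation on $L$ is also incomplete as stated: applying $\Hom(-,\Oo_L(b))$ to $0\to\Ii_{Z,L}\to\Oo_L\to\Oo_Z\to 0$ produces a boundary map into $\Ext^2_{L}(\Oo_Z,\Oo_L(b))$, which is nonzero when $n=3$ (there $L\cong\PP^2$), so the asserted reduction to extensions of line bundles does not follow from the vanishing you quote.

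The other gap is the first step. The classification of the possible restrictions $\Ee_{|H_i}$ in Propositions \ref{tty1} and \ref{i4.00} is genuinely two-dimensional: it uses the classification of stable bundles on $\PP^2$ with $(c_1,c_2)=(1,1)$ and degree counts such as $\deg(Z)\ge\binom{c+1}{2}$ coming from $h^0(\Oo_{\PP^2}(d))$, none of which transfers to $\PP^n$ verbatim; you flag only the torsion bookkeeping as the obstacle, which is not the real issue. The paper avoids redoing this classification: it restricts $\Ee$ to a general $3$-dimensional linear section $V$, applies Theorem \ref{i1} to the resulting aCM sheaf on the quadric surface $X_n\cap V$, and then lifts a general section back to $\Ee$; its zero locus would have pure codimension two, hence positive dimension, hence would meet $L_n$, where the split factor $\Ee_1$ forces the section to be nowhere vanishing --- so the zero locus is empty and $\Ee_2$ splits. (Alternatively, one can note that $\Ee_{2|L_n}\cong\Ee_{1|L_n}$ splits and $L_n\cong\PP^{n-1}$ is a hyperplane of $H_{2,n}\cong\PP^n$ with $n\ge3$, so $\Ee_2$ splits by \cite[Theorem 2.3.2 in Chapter II]{oss}.) Your concluding step --- diagonalizing the gluing once both restrictions split --- is fine and is exactly Lemma \ref{ee1}, and your $\Ext$-computation ruling out the locally free candidate through a point $p\notin L$ is correct; but as written the middle of the argument rests on a vanishing computed on the wrong variety and on a classification you have not established on $\PP^n$.
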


Let us summarize here the structure of this paper. In section $2$ we introduce the definitions and main properties that will be used throughout the paper, mainly those related to aCM sheaves and $m$-regularity of coherent sheaves. In section $3$ we give a complete description of aCM sheaves of pure rank one and collect several technical lemmas to investigate aCM bundles. We introduce the notion of a kernel sheaf of simple type as a generalization of aCM bundle. Then we show that every aCM bundle of rank two on $X_n$ with $n\ge 3$ splits. In section $4$ we suggest essentially two types of aCM sheaves of rank two on $X$ and show that $X$ is of wild type. Finally in section $5$ we classify all the aCM kernel sheaves of simple type of rank two on $X$ and show that every aCM sheaves of simple type of rank two on $X_n$ with $n \ge 3$ splits.

\section{Preliminaries}
Throughout the article, our base field $\mathbf{k}$ is algebraically closed of characteristic $0$.

In this section we are going to introduce definitions and properties of $m$-regularity and arithmetically Cohen-Macaulay sheaves that are going to be used throughout the rest of the paper.

Let $X$ be a projective variety with an ample line bundle $\Oo_X(1)$. For a coherent sheaf $\Ee$ on $X$ and $t\in \ZZ$, let us denote $\Ee\otimes \Oo_X(t)$ by $\Ee(t)$. As usual, $H^i(X,\Ee)$ stands for the cohomology group, $h^i(X,\Ee)$ for its dimension. If there is no confusion, we will skip $X$, i.e. we will write $H^i(\Ee)$ and $h^i(\Ee)$. We also denote the dual of $\Ee$ by $\Ee^\vee$. The ideal sheaf of a subscheme $Z\subset X$ is denoted by $\Ii_{Z,X}$, or simply $\Ii_Z$ if $X$ is a reducible and reduced quadric surface or there is no confusion.

\begin{definition}
A coherent sheaf $\Ee$ on a projective variety $X$ with an ample line bundle $\Oo_X(1)$ is called {\it arithmetically Cohen-Macaulay} (for short, aCM) if it is locally Cohen-Macaulay, i.e. $\mathrm{depth }~\Ee_x=\dim \Oo_{X,x}$ for all $x\in X$ and $H^i(\Ee(t))=0$ for all $t\in \ZZ$ and $i=1, \ldots, \dim (X)-1$.
\end{definition}
Recall that $X$ is said to be aCM if its homogeneous coordinate ring $\mathbf{k}[X]$ is a Cohen-Macaulay ring, that is, $\mathrm{depth}\mathbf{k}[X]=\dim \mathbf{k}[X]$. ACM sheaves on an aCM variety $X$ are important, because they are in one-to-one correspondence with the maximal Cohen-Macaulay modules over $\mathbf{k}[X]$. Inspired by a classification for quivers and for $\mathbf{k}$-algebras of finite type, the following trichotomy classification of aCM varieties was proposed in \cite{DG}:
\begin{itemize}
\item $X$ is of {\it finite type} if there are only finitely many indecomposable aCM sheaves up to twist;
\item $X$ is of {\it tame type} if for each rank $r$, the indecomposable aCM sheaves of rank $r$ form a finite number of families of dimension at most $1$;
\item $X$ is of {\it wild type} if there exist families of arbitrarily large dimension of indecomposable pairwise non-isomorphic aCM sheaves.
\end{itemize}
In \cite{FP} the complete trichotomy classification was accomplished for aCM varieties that are not cones.

Another important object in the study of aCM sheaves are Ulrich sheaves.
\begin{definition}\label{ulr}
An aCM sheaf $\Ee$ on $X\subset \PP^n$ with $h^0(\Ee(-1))=0$ is said to be an {\it Ulrich sheaf} if we have $h^0(\Ee)=\deg (X)\mathrm{rank}(\Ee)$.
\end{definition}
In other words, an Ulrich sheaf is the one achieving the maximal possibility for the minimal number of generators of $\mathbf{k}[X]$-module $\oplus_{t\in \ZZ} H^0(\Ee(t))$; see \cite{CMP}.

Now we introduce an important notion in projective algebraic geometry that will be used throughout the paper.
\begin{definition}
A coherent sheaf $\Ee$ on a projective variety $X$ with an ample line bundle $\Oo_X(1)$ is called {\it $m$-regular} if $H^i(\Ee (m-i))=0$ for all $i>0$.
\end{definition}
Due to the following result, the regularity measures the point at which cohomological complexities vanish.
\begin{theorem}\cite[4D at page 67]{Eisenbud}\cite[Theorem 1.8.5]{Lazarsfeld}
If $\Ee$ is an $m$-regular sheaf on $X$ with respect to $\Oo_X(1)$, then for any $k\ge0$,
\begin{itemize}
\item[(i)] $\Ee(m+k)$ is globally generated,
\item[(ii)] the natural maps
$$H^0(\Ee(m))\otimes H^0(\Oo_X(k)) \to H^0(\Ee(m+k))$$
are surjective, and
\item[(iii)] $\Ee$ is $(m+k)$-regular.
\end{itemize}
\end{theorem}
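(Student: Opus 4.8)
This is the classical Castelnuovo--Mumford regularity theorem, and the plan is to follow Mumford's induction on dimension. First I would reduce to the case $X=\PP^N$. Since in our situation $\Oo_X(1)$ is very ample, the linear system $|\Oo_X(1)|$ embeds $X$ as a closed subscheme $\iota\colon X\into \PP^N$ with $\Oo_X(1)=\iota^*\Oo_{\PP^N}(1)$. Because $\iota$ is a closed immersion, $\iota_*$ is exact with vanishing higher direct images and, by the projection formula, $(\iota_*\Ee)(t)=\iota_*(\Ee(t))$, so that $H^i(\PP^N,(\iota_*\Ee)(t))=H^i(X,\Ee(t))$ for all $i,t$. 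Hence $\iota_*\Ee$ is $m$-regular on $\PP^N$ exactly when $\Ee$ is, and each of the three conclusions descends from $\iota_*\Ee$ to $\Ee$ (for (ii) one uses that the restriction $H^0(\Oo_{\PP^N}(k))\to H^0(\Oo_X(k))$ makes the two multiplication maps compatible). After this reduction I would write $\Ff=\iota_*\Ee$ and work on $\PP^N$.

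The engine of the induction is hyperplane restriction. I would pick a general hyperplane $H\cong \PP^{N-1}$ whose defining form $\ell$ avoids $\Ass(\Ff)$, so that $\ell$ is a nonzerodivisor and $0\to \Ff(-1)\to \Ff\to \Ff_H\to 0$ is exact; twisting by $\Oo(m-i)$ and taking cohomology, $m$-regularity of $\Ff$ forces $H^i(\Ff_H(m-i))=0$ for $i>0$, i.e. $\Ff_H$ is $m$-regular on $H$. Using this I would establish (iii) first, by induction on $N$: it suffices to show an $m$-regular sheaf is $(m+1)$-regular. For $i\ge 1$ the sequence $0\to \Ff(m-i)\to \Ff(m+1-i)\to \Ff_H(m+1-i)\to 0$, together with $H^i(\Ff(m-i))=0$, gives an injection $H^i(\Ff(m+1-i))\into H^i(\Ff_H(m+1-i))$; the target vanishes because $\Ff_H$ is $m$-regular, hence $(m+1)$-regular on $H$ by the inductive hypothesis (the base $N=0$ being trivial). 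Iterating yields $(m+k)$-regularity for all $k\ge 0$.

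Next I would deduce (ii), whose core is the case $k=1$: if $\Ff$ is $m$-regular then $\mu\colon H^0(\Ff(m))\otimes H^0(\Oo(1))\to H^0(\Ff(m+1))$ is surjective. I would prove this by induction on $N$ via a diagram chase. Given $s\in H^0(\Ff(m+1))$, its restriction $\bar s$ to $H$ is, by the inductive hypothesis applied to the $m$-regular sheaf $\Ff_H$, a sum of products of sections of $\Ff_H(m)$ with linear forms on $H$; these sections lift to $H^0(\Ff(m))$ because $H^0(\Ff(m))\to H^0(\Ff_H(m))$ is onto (as $H^1(\Ff(m-1))=0$), producing $s'\in\Image\mu$ with $s-s'$ vanishing on $H$, so $s-s'=\ell\cdot t_0$ with $t_0\in H^0(\Ff(m))$ and thus $s\in\Image\mu$. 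For general $k$ I would iterate: by (iii) the sheaf $\Ff$ is $(m+k-1)$-regular, so the $k=1$ case and the surjectivity of $H^0(\Oo(k-1))\otimes H^0(\Oo(1))\to H^0(\Oo(k))$ on $\PP^N$ let me factor the degree-$k$ multiplication through a chain of surjections. Finally, for (i), I would let $\Gg\subseteq \Ff(m)$ be the subsheaf generated by $H^0(\Ff(m))$; for $t\gg 0$ the sheaf $\Ff(t)$ is globally generated and, by (ii), $H^0(\Ff(m))\otimes H^0(\Oo(t-m))\to H^0(\Ff(t))$ is surjective, with every such product lying in $\Gg(t-m)$, so $H^0(\Gg(t-m))=H^0(\Ff(t))$ and hence $\Gg(t-m)=\Ff(t)$, i.e. $\Gg=\Ff(m)$. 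Replacing $m$ by $m+k$ (legitimate by (iii)) then gives the generation of $\Ff(m+k)$.

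The hardest step will be the $k=1$ multiplication lemma feeding (ii): it is the only point at which the induction genuinely transfers a surjectivity statement from $H$ up to $\PP^N$, and it has to be arranged so that (iii) is already in hand to keep the intermediate twists regular. The other delicate point is the genericity of $H$, namely choosing $\ell\notin\bigcup_{\mathfrak p\in \Ass(\Ff)}\mathfrak p$, since the exactness of the restriction sequence — and therefore the entire argument — depends on $\ell$ being a nonzerodivisor on $\Ff$.
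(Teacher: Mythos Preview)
The paper does not supply a proof of this statement at all: it is quoted verbatim from the standard references (Eisenbud, Lazarsfeld) and used as a black box, so there is no ``paper's own proof'' to compare against. Your sketch is the classical Mumford hyperplane-section argument that those references present, and it is essentially correct: the reduction to $\PP^N$ by pushforward along a closed immersion, the choice of $H$ avoiding $\Ass(\Ff)$, the induction on $N$ establishing (iii), the lifting/diagram-chase for the $k=1$ multiplication map underlying (ii), and the deduction of (i) from (ii) are all standard and sound.

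One small caveat worth flagging: your reduction step assumes $\Oo_X(1)$ is \emph{very} ample so that $|\Oo_X(1)|$ embeds $X$ in $\PP^N$, whereas the paper's Definition only says ``ample''. The cited Theorem 1.8.5 in Lazarsfeld is literally for $\PP^r$, and the extension to an arbitrary polarized $(X,\Oo_X(1))$ needs $\Oo_X(1)$ ample and globally generated (so that a general divisor in $|\Oo_X(1)|$ avoids the associated points of $\Ee$); mere ampleness is not enough for conclusion (i). In the paper's applications $\Oo_X(1)$ is always the restriction of $\Oo_{\PP^{n+1}}(1)$ to a quadric hypersurface, hence very ample, so your assumption is harmless in context---but you might want to state it explicitly rather than silently upgrade ``ample'' to ``very ample''.
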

In particular, if $\Ee$ is $0$-regular, then it is globally generated. Since each bundle $\Ee$ is $m$-regular for some $m\gg0$, it follows that there is a twist of $\Ee$ which is $0$-regular. Note that a sheaf being an aCM is invariant under twist and so we may always assume that our aCM sheaf is $0$-regular and so it is globally generated.


\section{Bundles on a reducible quadric surface}

Let $X\subset \PP^3$ be a reducible and reduced quadric surface, i.e. $X = H_1\cup H_2$ with $H_i$ distinct planes in $\PP^3$. Let $S=\mathbf{k}[x,y,z,w]$ be the polynomial ring of $\PP^3$ and $x,y$ the defining equations of $H_1, H_2$, respectively. If we define $L:=H_1 \cap H_2$, then we have $\Pic (X)\cong \ZZ$ by \cite[Example 5.2]{HartK}, generated by a hyperplane section, which in general consists of lines $L_i \subset H_i$ for $i=1,2$, meeting $L$ at the same point. Let us denote the ample generator $\Oo_{\PP^3}(1) \otimes \Oo_X$ of $\Pic (X)$ by $\Oo_X(1)$. Then the dualizing sheaf of $X$ is $\omega_X \cong \Oo_X(-2)$.

\begin{definition}
A coherent sheaf $\Ee$ on $X$ is said to have pure rank $r\in \ZZ$ if it has rank $r$ at a general point on each component $H_i$.
\end{definition}

\begin{remark}
There are two other reasonable notions of rank:
\begin{enumerate}
\item the pair $(r_1, r_2)$ of two ranks at general points of each components; they may be different for sheaves not locally free at any point on $\Ll$,
\item the Hilbert polynomial gives a unique number in $(1/2)\ZZ$, which is equal to $(r_1+r_2)/2$.
\end{enumerate}
\end{remark}

\begin{remark}\label{prem}
Since every line bundle on a projective plane is aCM, so is every line bundle on $X$ due to the following exact sequence
\begin{equation}\label{yyy}
0\to \Oo_X \to \Oo_{H_1}\oplus \Oo_{H_2} \to \Oo_L \to 0,
\end{equation}
because the map $H^0(\Oo_{H_1}(t)\oplus \Oo_{H_2}(t)) \rightarrow H^0(\Oo_L(t))$ is surjective for each $t\in \ZZ$. As a consequence, every direct sum of line bundles on $X$ is aCM. Note also that any extension of an aCM bundle by a line bundle splits.
\end{remark}


\begin{lemma}\label{ee1}
Let $\Ee$ and $\Ff$ be two vector bundles on $X$ such that
\begin{itemize}
\item[(i)] $\Ee _{|H_1}$ is a direct sum of line bundles, and
\item[(ii)] $\Ee _{|H_i} \cong \Ff _{|H_i}$ for each $i=1,2$.
\end{itemize}
Then we have $\Ee \cong \Ff$.
\end{lemma}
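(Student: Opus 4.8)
The plan is to reconstruct a bundle on $X = H_1 \cup H_2$ from its restrictions to the two components, using the fact that a bundle on $X$ is equivalent to the data of a bundle on each $H_i$ together with a gluing isomorphism of their restrictions to $L = H_1 \cap H_2$. Concretely, giving a vector bundle $\Gg$ on $X$ amounts to giving $\Gg_{|H_1}$, $\Gg_{|H_2}$, and an isomorphism $\theta\colon \Gg_{|H_1}|_L \xrightarrow{\sim} \Gg_{|H_2}|_L$; this follows from pulling back along the pushout diagram underlying the Mayer--Vietoris sequence \eqref{yyy}. Thus $\Ee$ is determined by $(\Ee_{|H_1}, \Ee_{|H_2}, \theta_\Ee)$ and $\Ff$ by $(\Ff_{|H_1}, \Ff_{|H_2}, \theta_\Ff)$, and by hypothesis (ii) we may fix isomorphisms $\alpha_i\colon \Ee_{|H_i} \xrightarrow{\sim} \Ff_{|H_i}$ for $i = 1,2$. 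To produce $\Ee \cong \Ff$ it then suffices to find automorphisms $\beta_i$ of $\Ee_{|H_i}$ making the square relating $\theta_\Ee$ and $\theta_\Ff$ commute after conjugating by the $\alpha_i$; equivalently, writing $\gamma := (\alpha_2|_L)\,\theta_\Ee\,(\alpha_1|_L)^{-1}\,\theta_\Ff^{-1} \in \Aut(\Ff_{|H_2}|_L)$, we must show $\gamma$ lifts to an automorphism of $\Ff_{|H_1}|_L$ along the restriction $\Aut(\Ff_{|H_1}) \to \Aut(\Ff_{|H_1}|_L)$ (and then absorb it on the $H_1$ side).

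The role of hypothesis (i) is precisely to make this lifting possible. Since $\Ee_{|H_1} \cong \bigoplus_j \Oo_{H_1}(a_j)$, the sheaf $\mathcal{E}nd(\Ee_{|H_1}) \cong \bigoplus_{j,k}\Oo_{H_1}(a_j - a_k)$ has the property that $H^0(\mathcal{E}nd(\Ee_{|H_1})) \to H^0(\mathcal{E}nd(\Ee_{|H_1})|_L)$ is surjective: indeed $L \cong \PP^1$ is a hyperplane in $H_1 \cong \PP^2$, so for each summand the restriction map $H^0(\Oo_{H_1}(d)) \to H^0(\Oo_L(d))$ is surjective for every $d \in \ZZ$ (it is the zero map onto zero when $d < 0$, and surjective when $d \ge 0$), and surjectivity is compatible with direct sums. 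Hence every endomorphism of $\Ee_{|H_1}|_L$, and in particular every automorphism (which can be approached as a global section of the endomorphism bundle restricted to $L$ that happens to be invertible), extends to a \emph{global endomorphism} of $\Ee_{|H_1}$. One then checks that such a lift can be chosen to be an automorphism: an endomorphism of $\Ee_{|H_1}$ restricting to an isomorphism on $L$ is automatically an isomorphism, since its determinant is a section of a line bundle $\Oo_{H_1}(\sum a_j) \otimes \Oo_{H_1}(-\sum a_j) = \Oo_{H_1}$ whose restriction to $L$ is a nonzero constant, hence is itself a nonzero constant. Using the $\alpha_i$ to transport $\gamma$ to $\Aut(\Ee_{|H_1}|_L)$, lifting it to $\tilde\gamma \in \Aut(\Ee_{|H_1})$, and replacing $\alpha_1$ by $\alpha_1 \circ \tilde\gamma^{-1}$, the two gluing data are identified, and descent along \eqref{yyy} yields $\Ee \cong \Ff$.

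The main obstacle is the descent/gluing formalism: one must justify carefully that a vector bundle on the reducible scheme $X$ is genuinely equivalent to the triple $(\Gg_{|H_1}, \Gg_{|H_2}, \theta)$ — that is, that \eqref{yyy} presents $\Oo_X$ as a fibre product $\Oo_{H_1} \times_{\Oo_L} \Oo_{H_2}$ and that this extends to an equivalence of categories of quasi-coherent (here locally free) modules. This is a standard Milnor-patching statement for a scheme glued along a closed subscheme, but it is the step where sign/commutativity conventions for the gluing isomorphisms must be pinned down precisely, and it is where one uses that $\Ee$ and $\Ff$ are honest \emph{vector bundles} rather than arbitrary coherent sheaves. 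Everything else — the cohomological surjectivity on $\PP^2 \supset \PP^1$ and the determinant argument promoting a lift to an automorphism — is routine once that framework is in place. Alternatively, one can avoid the abstract formalism entirely by an explicit argument: lift $\alpha_2$ and the adjusted $\alpha_1$ to a map $\Ee \to \Ff$ by invoking the left-exactness of $\Hom(-,\Ff)$ applied to \eqref{yyy} tensored with $\mathcal{H}om(\Ee,\Ff)$, noting that the two components now agree on $L$ so they come from a global section of $\mathcal{H}om(\Ee,\Ff)$, and then check this global homomorphism is an isomorphism fibrewise on each $H_i$.
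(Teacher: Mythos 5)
Your proof is correct and follows essentially the same route as the paper: both reduce to lifting the isomorphism over $L$ to an isomorphism of $\Ee_{|H_1}$ using that a split bundle on $H_1\cong\PP^2$ restricts surjectively (on endomorphisms) to the line $L$, and both conclude invertibility of the lift by the same determinant argument over an algebraically closed field. The abstract gluing formalism you flag as the main obstacle is exactly what the paper sidesteps via the Mayer--Vietoris sequence for $\Ee\otimes\Ff^\vee$ --- the alternative you sketch in your final sentence --- so no extra machinery is required.
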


\begin{proof}
Consider the exact sequence
\begin{equation}\label{eqee1}
0 \to \Ee \otimes \Ff ^\vee \to  \Ee \otimes \Ff ^\vee _{|H_1}\oplus  \Ee \otimes \Ff ^\vee _{|H_2}\to  \Ee \otimes \Ff ^\vee _{|L}\to 0.
\end{equation}
Fix an isomorphism $g: \Ee _{|H_2} \rightarrow \Ff _{|H_2}$ and let $g': \Ee _{|L}\rightarrow \Ff _{|L}$ denote its restriction to $L$. By (\ref{eqee1}) it is sufficient to prove the existence of an isomorphism $f: \Ee _{|H_1}\rightarrow \Ff _{|H_1}$ such that $f_{|L} = g'$, because it would imply the existence of a morphism $\Ee\rightarrow \Ff$ whose restriction to $H_i$ is an isomorphism for each $i=1,2$.

Fix homogeneous coordinates $[x_0:x_1:x_2]$ on $H_1$ such that $L :\{x_0=0\}$. We identify $\Ee _{|H_1}$ and $\Ff _{|H_1}$ with the split bundle $\Aa := \oplus  _{i=1}^{r}\Oo _{H_1}(a_i)$ and their restriction to $L$ with $\Aa _{|L}$. With these identification $g'$ is given by an $(r\times r)$-matrix $M$ with homogeneous linear entries in $x_1,x_2$. The same matrix $M$ induces a morphism $f: \Ee_{|H_1} \cong \Aa \rightarrow  \Aa\cong \Ff_{|H_1}$ such that $f_{|L} =g'$ by definition. Now it is enough to prove that $f$ is invertible, i.e. $\det (M)$ vanishes at no point of $H_1$. Note that $\det (M)$ is a homogeneous form in the variables $x_1,x_2$. Since $M$ has no zeros on $L$ and our base field is algebraically closed, then $\det (M)$ is a nonzero constant and so $f$ is invertible at each point of $H_1$.
\end{proof}


Tensoring (\ref{yyy}) with a vector bundle $\Ee$ on $X$, we also have an exact sequence
\begin{equation}\label{eqa2}
0 \to \Ee \to \Ee_{|H_1}\oplus \Ee_{|H_2} \to \Ee_{|L} \to 0.
\end{equation}

Fix a positive integer $r$ and let $\Ff_i$ be a vector bundle of rank $r$ on $H_i$ with the restriction map $u_i: \Ff_i\rightarrow \Ff_{i|L}$ for each $i=1,2$. Assume $\Ff_{1|L}\cong \Ff_{2|L}$ and fix an isomorphism $e: \Ff_{2|L}\rightarrow \Ff_{1|L}$ as $\Oo _L$-modules. Two maps $u_1$ and $e \circ u_2$ may be seen as maps of $\Oo_X$-sheaves, considering $\Oo_{H_i}$ as a quotient of $\Oo_X$. Then we get a surjection $u:=(e\circ u_1,-u_2): \Ff_1\oplus \Ff_2 \rightarrow \Ff_{1|L}$ of $\Oo _X$-sheaves with $\Kk := \mathrm{ker}(u)$ a coherent $\Oo _X$-sheaf. By definition we have an exact sequence
\begin{equation}\label{eqx1}
0 \to \Kk \to \Ff_1\oplus \Ff_2 \to \Ff_{1|L}\to 0.
\end{equation}

\begin{definition}\label{yyt}
We call such a sheaf $\Kk$ above {\it a kernel sheaf}. If one of the bundles $\Ff_i$ splits, then $\Kk$ is said to be {\it of simple type}.
\end{definition}

The restriction $\Kk_{|H_i}$ may have torsion on some points on $L$ and its quotient by torsion part $\Kk_{|H_i}^{\circ}$ is isomorphic to $\Ff_i$ for each $i$.

\begin{lemma}\label{rrr0}
Let $\Ee$ be an aCM sheaf of rank two fitting into an exact sequence
\begin{equation}\label{eqrrr11}
0 \to \Ee \stackrel{j}{\to} \Ee _1\oplus \Ee _2 \to \Ee _{1|L}\to 0
\end{equation}
with each $\Ee _i$ a vector bundle of rank two on $H_i$ and $\Ee _1\cong \Oo _{H_1}(a)\oplus \Oo _{H_1}(b)$ for some integer $a\ge b$. Then $H^0(\Ee (-b))$ spans $\Ee (-b)$ at all points of $X\setminus L$ and $\Ee_2(-b)$ is spanned at all points of $H_2\setminus L$.
\end{lemma}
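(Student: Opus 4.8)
The plan is to transfer cohomological information from $\Ee$, which is aCM, to its component bundle $\Ee_2$, handling the two halves of $X\setminus L=(H_1\setminus L)\sqcup(H_2\setminus L)$ separately. Composing $(\ref{eqrrr11})$ with the two projections $\Ee_1\oplus\Ee_2\to\Ee_i$ and using $\Ii_{L,H_i}\cong\Oo_{H_i}(-1)$ gives the exact sequences of $\Oo_X$-sheaves
\begin{equation*}
0\to\Ee_2(-1)\to\Ee\to\Ee_1\to0,\qquad 0\to\Ee_1(-1)\to\Ee\to\Ee_2\to0,
\end{equation*}
where $\Ee_i(-1)$ is the extension by zero from $H_i$ and we recall that $\Ee_{|H_i}$ modulo torsion is $\Ee_i$. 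Replacing $\Ee$ by $\Ee(-b)$ and each $\Ee_i$ by $\Ee_i(-b)$ (aCM is preserved by twist), I reduce to the case $b=0$, where $\Ee_1\cong\Oo_{H_1}(a)\oplus\Oo_{H_1}$ with $a\ge0$ and it must be shown that $H^0(\Ee)$ spans $\Ee$ on $X\setminus L$ and that $\Ee_2$ is globally generated at the points of $H_2\setminus L$.

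The first step is to see that both restriction maps on global sections are surjective. Twisting the second sequence by $-1$ gives $0\to\Ee_1(-2)\to\Ee(-1)\to\Ee_2(-1)\to0$; since $H^1(\Ee(-1))=0$ ($\Ee$ aCM) and $H^2(H_1,\Ee_1(-2))=H^2(\PP^2,\Oo(a-2)\oplus\Oo(-2))=0$ (using $a\ge0$), we get $H^1(H_2,\Ee_2(-1))=0$, and then the first sequence yields $H^0(\Ee)\twoheadrightarrow H^0(\Ee_1)$. Similarly the second sequence yields $H^0(\Ee)\twoheadrightarrow H^0(\Ee_2)$, as its kernel $\Ee_1(-1)$ is a sum of line bundles on $\PP^2$ and so has no $H^1$. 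Since $\Ee_1\cong\Oo_{H_1}(a)\oplus\Oo_{H_1}$ is globally generated ($a\ge0$) and $\Ee\to\Ee_1$ is an isomorphism near each point of $H_1\setminus L$, the first surjection already gives the spanning of $\Ee$ on $H_1\setminus L$. By the surjection onto $H^0(\Ee_2)$ together with $\Ee\cong\Ee_2$ near points of $H_2\setminus L$, it then suffices to prove that $\Ee_2$ is globally generated on $H_2\setminus L$.

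I would obtain this by showing $\Ee_2$ is $0$-regular on $H_2\cong\PP^2$, i.e.\ that $H^1(\Ee_2(-1))=0$ (done) and $H^2(\Ee_2(-2))=0$. For the latter, the first sequence twisted by $-1$, namely $0\to\Ee_2(-2)\to\Ee(-1)\to\Ee_1(-1)\to0$ with $H^1(H_1,\Ee_1(-1))=0$, gives an inclusion $H^2(H_2,\Ee_2(-2))\hookrightarrow H^2(X,\Ee(-1))$, so it all reduces to $H^2(X,\Ee(-1))=0$. This is the step I expect to be the main obstacle, since it is not formal from aCMness (which only forces $H^1$ to vanish). By Serre duality on the Gorenstein surface $X$ (with $\omega_X\cong\Oo_X(-2)$), $H^2(X,\Ee(-1))\cong\Hom_X(\Ee(-1),\omega_X)^{\vee}=\Hom_X(\Ee,\Oo_X(-1))^{\vee}$, so it suffices to check $\Hom_X(\Ee,\Oo_X(-1))=0$. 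Any $\phi\colon\Ee\to\Oo_X(-1)$ restricts on $H_1$ to a map factoring through $\Ee_1\cong\Oo_{H_1}(a)\oplus\Oo_{H_1}$, and $\Hom(\Oo_{H_1}(a)\oplus\Oo_{H_1},\Oo_{H_1}(-1))=0$ because $a\ge0$; hence $\phi$ vanishes on $H_1$, so it factors through $\Ii_{H_1,X}(-1)\cong\Oo_{H_2}(-2)$, and the resulting map $\Ee\to\Oo_{H_2}(-2)$ factors through $\Ee_2$, i.e.\ is a section of $\Ee_2^{\vee}(-2)$. Since $\Ee_2^{\vee}(-2)_{|L}\cong\Oo_L(-a-2)\oplus\Oo_L(-2)$ has no global sections, restricting the sheaves $\Ee_2^{\vee}(-2-j)$ to $L$ and inducting on $j\ge0$ forces $H^0(H_2,\Ee_2^{\vee}(-2))=0$; hence $\phi=0$ and $H^2(X,\Ee(-1))=0$.

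With $\Ee_2$ now $0$-regular on $H_2$, the regularity theorem in Section~2 shows that $\Ee_2$ is globally generated on all of $H_2$; combined with $H^0(\Ee)\twoheadrightarrow H^0(\Ee_2)$ this gives the spanning of $\Ee$ at every point of $H_2\setminus L$, hence on all of $X\setminus L$, and in particular $\Ee_2$ is globally generated at the points of $H_2\setminus L$. This completes the proof of the reduced ($b=0$) statement, and hence of the lemma.
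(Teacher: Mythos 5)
Your proof is correct, and at the decisive step it takes a genuinely different --- and more complete --- route than the paper. Both arguments begin the same way in substance: you extract $h^1(H_2,\Ee_2(-1))=0$ from the aCM hypothesis (the paper does this through the Mayer--Vietoris sequence together with $h^1(L,\Ee_{1|L}(-1))=0$, you through the projection sequence $0\to\Ee_1(-2)\to\Ee(-1)\to\Ee_2(-1)\to0$ and $h^2(H_1,\Ee_1(-2))=0$; the bookkeeping is equivalent), and both then deduce the two surjections $H^0(\Ee)\onto H^0(H_i,\Ee_i)$ by lifting sections across $L$. The divergence is in how one passes from these surjections to the spanning statement: since $\Ee_1$ is a globally generated split bundle, the surjection onto $H^0(H_1,\Ee_1)$ immediately spans $\Ee$ on $H_1\setminus L$, but on the $H_2$ side one still needs to know that $\Ee_2$ itself is generated by its global sections off $L$, and this is exactly where the paper's own proof is thinnest --- it states the conclusion right after producing the subspace $W$ mapping isomorphically onto $H^0(H_2,\Ee_2)$. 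You supply the missing ingredient by proving that $\Ee_2$ is $0$-regular on $H_2\cong\PP^2$, which requires the non-formal vanishing $h^2(X,\Ee(-1))=0$; your derivation of it via Serre duality with $\omega_X\cong\Oo_X(-2)$, the factorization of any $\phi\colon\Ee\to\Oo_X(-1)$ through $\Oo_{H_2}(-2)$ because $\Hom(\Ee_1,\Oo_{H_1}(-1))=0$, and the descending-twist argument killing $H^0(H_2,\Ee_2^{\vee}(-2))$ using $\Ee_{2|L}\cong\Oo_L(a)\oplus\Oo_L$, is correct. The payoff of your version is a stronger conclusion (global generation of $\Ee_2$ on all of $H_2$, not merely off $L$) and a fully justified lemma; the cost is the extra duality computation, which the paper's shorter argument avoids only by leaving the $H_2$-spanning step implicit.
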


\begin{proof}
It is enough to deal with the case $b=0$. Let $V$ be the kernel of the restriction map $ H^0(H_1,\Ee _1)\rightarrow H^0(L,\Ee _{1|L})$. Then by (\ref{eqrrr11}) $V$ sits inside $H^0(\Ee )$ and it spans
$\Ee$ at all point of $H_1\setminus L$. Since $h^1(L,\Ee _{1|L}(-1)) =0$ and $\Ee$ is aCM, we have $h^1(H_2,\Ee _2(-1)) =0$. Hence the restriction map $H^0(H_2,\Ee _2)\rightarrow H^0(L,\Ee_{2|L})$ is surjective. Thus (\ref{eqrrr11}) gives the existence of a linear subspace $W\subset H^0(\Ee)$ mapping isomorphically onto $H^0(H_1,\Ee _1)$. Since the restriction map $H^0(H_1,\Ee _1) \rightarrow H^0(L,\Ee _{1|L})$ is surjective, there is a linear subspace $W\subseteq H^0(\Ee)$ mapped isomorphically onto $H^0(H_2,\Ee _2)$. Since the map $j$ induces an isomorphism on $X\setminus L$, $H^0(\Ee)$ spans $\Ee$ at all points of $H_2\setminus L$ and $\Ee_2$ is spanned at all points of $H_2\setminus L$.
\end{proof}

\begin{remark}\label{rrr1}
Note that for aCM kernel sheaf $\Ee$ of simple type with rank two, the integer $t$ for which $\Ee_{|H_i}(t)$ is spanned with a trivial factor, is uniquely determined by Lemma \ref{rrr0}: it is the maximal integer $t$ such that $\Ee (-t)$ is spanned at a general point of $H_1$ and at a general point of $H_2$.
\end{remark}

\begin{remark}
We do not claim that the kernel sheaf $\Kk$ is independent on the choice of the isomorphisms $e: \Ff_{2|L}\rightarrow \Ff_{1|L}$; see \cite{Daoudi}.
\end{remark}

\begin{lemma}\label{x2}
With notations as above, if $\Ff_1\cong \oplus _{i=1}^{r} \Oo _{H_1}(a_i)$ for $a_i \in \ZZ$, then we have
$$h^1(\Kk(t))=\dim \ker \left\{ H^1(\Ff_2(t)) \to H^1({\Ff_2}_{|L}(t)) \right\}$$
for each $t\in \ZZ$.
\end{lemma}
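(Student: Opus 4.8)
The plan is to read off the cohomology of $\Kk(t)$ directly from the defining exact sequence (\ref{eqx1}) (twisted by $\Oo_X(t)$) and exploit the hypothesis that $\Ff_1$ is split. Twisting (\ref{eqx1}) gives
\begin{equation*}
0 \to \Kk(t) \to \Ff_1(t)\oplus \Ff_2(t) \to {\Ff_1}_{|L}(t)\to 0,
\end{equation*}
and the associated long exact sequence in cohomology reads
\begin{equation*}
H^0(\Ff_1(t))\oplus H^0(\Ff_2(t)) \to H^0({\Ff_1}_{|L}(t)) \to H^1(\Kk(t)) \to H^1(\Ff_1(t))\oplus H^1(\Ff_2(t)) \to H^1({\Ff_1}_{|L}(t)).
\end{equation*}
Since $\Ff_1 \cong \oplus_{i=1}^r \Oo_{H_1}(a_i)$ is a sum of line bundles on a projective plane, we have $H^1(\Ff_1(t)) = 0$ for all $t$. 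So the last map in the displayed sequence is nothing but the restriction map $H^1(\Ff_2(t)) \to H^1({\Ff_2}_{|L}(t))$, after identifying ${\Ff_1}_{|L}(t) \cong {\Ff_2}_{|L}(t)$ via the fixed isomorphism $e$.

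**Next I would** analyze the connecting portion. The kernel of $H^1(\Kk(t)) \to H^1(\Ff_2(t))$ is the cokernel of the map $\mu_t\colon H^0(\Ff_1(t))\oplus H^0(\Ff_2(t)) \to H^0({\Ff_1}_{|L}(t))$. I claim $\mu_t$ is already surjective using only the $H^0(\Ff_1(t))$ factor: the restriction map $H^0(\Ff_1(t)) \to H^0({\Ff_1}_{|L}(t))$ is surjective because its cokernel injects into $H^1(H_1, \Ff_1(t-1))$ (from $0 \to \Ff_1(t-1) \to \Ff_1(t) \to {\Ff_1}_{|L}(t) \to 0$ on $H_1$, using that $\Oo_{H_1}(-L)\cong \Oo_{H_1}(-1)$), and this $H^1$ vanishes as $\Ff_1$ is a split bundle on $\PP^2$. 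Hence $\mu_t$ is surjective, $H^1(\Kk(t))$ injects into $H^1(\Ff_1(t))\oplus H^1(\Ff_2(t)) = H^1(\Ff_2(t))$, and its image is exactly the kernel of $H^1(\Ff_2(t)) \to H^1({\Ff_2}_{|L}(t))$. Taking dimensions gives the asserted formula.

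**The one point that needs a little care** — really the only potential obstacle — is the identification of the last map in the long exact sequence with the honest restriction map $H^1(\Ff_2(t)) \to H^1({\Ff_2}_{|L}(t))$. The surjection $u = (e\circ u_1, -u_2)$ restricted to the $\Ff_2$ summand is $-u_2$ followed by the chosen isomorphism $e$; the sign and the isomorphism $e$ are harmless at the level of kernels of induced maps on cohomology, so $\dim\ker\{H^1(\Ff_2(t)) \to H^1({\Ff_1}_{|L}(t))\} = \dim\ker\{H^1(\Ff_2(t)) \to H^1({\Ff_2}_{|L}(t))\}$. Everything else is a routine diagram chase through the long exact sequence, so the proof is short once the split‑bundle vanishing $H^1(H_1,\Ff_1(t)) = 0$ and the surjectivity of $H^0(\Ff_1(t)) \to H^0({\Ff_1}_{|L}(t))$ are in hand.
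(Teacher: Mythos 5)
Your argument is correct and is essentially the paper's own proof: both use the defining sequence of $\Kk$ twisted by $t$, the vanishing $H^1(\Ff_1(t))=0$ for a split bundle on $\PP^2$, and the surjectivity of $H^0(\Ff_1(t))\to H^0({\Ff_1}_{|L}(t))$ to identify $H^1(\Kk(t))$ with the kernel of $H^1(\Ff_2(t))\to H^1({\Ff_2}_{|L}(t))$. Your write-up just spells out the diagram chase and the harmlessness of the isomorphism $e$ and the sign, which the paper leaves implicit.
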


\begin{proof}
By assumption we have $h^1(\Ff_1(t)) =0$ for all $t\in \ZZ$ and the restriction map $H^0(\Ff_1(t)) \rightarrow H^0(\Ff_{1|L}(t))$ is surjective. Now we may use (\ref{eqa2}) for $\Ee=\Kk(t)$.
\end{proof}

\begin{proposition}\label{propp}
If $\Ee$ is an aCM sheaf of pure rank one on $X$, then it fits into an exact sequence
\begin{equation}\label{ssee}
0\to \Oo_{H_i}(a)\to \Ee \to \Oo_{H_{3-i}}(b)\to 0
\end{equation}
for some $i\in \{1,2\}$ and $a,b\in \ZZ$.
\end{proposition}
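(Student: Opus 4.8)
The plan is to normalize $\Ee$, split it by restricting to the two components $H_1,H_2$, and then use the aCM vanishing to force \emph{both} halves of the resulting extension to be genuine line bundles.

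First I would twist $\Ee$ so that it is $0$-regular; this is harmless since aCM-ness is twist-invariant, and by the regularity theorem $\Ee$ then becomes globally generated while $H^1(\Ee(t))=0$ for all $t\in\ZZ$ (as $\dim X=2$). Being locally Cohen--Macaulay of pure rank one on the pure two-dimensional $X$, the sheaf $\Ee$ has depth $\ge 1$ at every codimension-one point, so its only associated points are the generic points $\eta_{H_1},\eta_{H_2}$; in particular $\Ee$ is torsion free. Restricting to $H_1$ and dividing by torsion gives a surjection $\Ee\onto\Ee_{|H_1}^{\circ}$ whose target is a rank-one torsion-free sheaf on $H_1\cong\PP^2$, hence $\Ee_{|H_1}^{\circ}\cong\Ii_{Z_1,H_1}(a_1)$ with $Z_1$ zero-dimensional and $a_1\in\ZZ$. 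The kernel $\Kk$ has rank one at $\eta_{H_2}$ and rank zero at $\eta_{H_1}$, so, being a subsheaf of the torsion-free $\Ee$, it is a rank-one torsion-free $\Oo_{H_2}$-module, i.e.\ $\Kk\cong\Ii_{Z_2,H_2}(a_2)$. This produces $0\to\Ii_{Z_2,H_2}(a_2)\to\Ee\to\Ii_{Z_1,H_1}(a_1)\to0$.

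Next I would show $Z_2=\emptyset$. Twisting this sequence by $t$, the boundary map $H^0(\Ii_{Z_1,H_1}(a_1{+}t))\xrightarrow{\partial}H^1(\Ii_{Z_2,H_2}(a_2{+}t))\to H^1(\Ee(t))=0$ is surjective for every $t$; for $t\ll0$ the source vanishes while the target is $H^0(\Oo_{Z_2})$ of dimension $|Z_2|$, so $Z_2=\emptyset$ and $\Ee$ sits in $0\to\Oo_{H_2}(a_2)\to\Ee\to\Ii_{Z_1,H_1}(a_1)\to0$. Exchanging $H_1$ and $H_2$ in the same argument gives a second sub-line-bundle $\Oo_{H_1}(c_1)=\ker(\Ee\to\Ee_{|H_2}^{\circ})\subset\Ee$, with $\Ee_{|H_2}^{\circ}\cong\Ii_{W,H_2}(c_2)$.

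The remaining, and main, task is to prove $Z_1=\emptyset$; this is the step I expect to cost the most. The composite $\Oo_{H_1}(c_1)\hookrightarrow\Ee\onto\Ii_{Z_1,H_1}(a_1)$ is a generically isomorphic, hence injective, map of rank-one torsion-free sheaves on $H_1$, so it is multiplication by a form $\sigma$ of degree $e:=a_1-c_1\ge0$ vanishing on $Z_1$. Analysing $\Qq:=\Ee/\bigl(\Oo_{H_1}(c_1)\dirsum\Oo_{H_2}(a_2)\bigr)$ — which is simultaneously a quotient of $\Ii_{Z_1,H_1}(a_1)/\sigma\Oo_{H_1}(c_1)$, supported on the zero scheme of $\sigma$ inside $H_1$, and of the analogous sheaf supported on a curve inside $H_2$ — forces $\op{Supp}\Qq\subseteq L$, hence the zero scheme of $\sigma$ is $eL$ and $Z_1\subseteq eL$; in particular $Z_1$ is supported on $L$, and if $e=0$ then $\sigma$ is a nonzero constant and $Z_1=\emptyset$ already. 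So assume $e\ge1$. I would then pit the aCM constraints against positivity: equality of the Hilbert polynomials of the two extensions gives $a_1+a_2=c_1+c_2$ together with $|W|-|Z_1|=e\bigl(e-(a_1-a_2)\bigr)$; global generation of $\Ee$ makes $\Ii_{Z_1,H_1}(a_1)$ globally generated; the vanishing $H^0(\Ee(-1))=0$ — arranged by additionally twisting down to the least degree carrying a section, after which one must check $\Ee$ is still $0$-regular, or else treat that case separately — shows no curve of degree $a_1-1$ passes through $Z_1$; and a length-$k$ subscheme of $\PP^2$ with $k\ge2$ always lies on a curve of degree $\le k-1$. Feeding these together with $H^2(\Ee(-2))=0$ and, at the minimal regular twist, $H^2(\Ee(-3))\ne0$, should corner $|Z_1|$ to $0$, the genuinely delicate points being the reconciliation of the positivity normalization with the aCM vanishings and the exclusion of the borderline configuration in which $Z_1$ is a single reduced point of $L$. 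Once $Z_1=\emptyset$, the sequence $0\to\Oo_{H_2}(a_2)\to\Ee\to\Oo_{H_1}(a_1)\to0$ is of the asserted form (\ref{ssee}).
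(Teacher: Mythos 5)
Your overall strategy---restrict to the two components, divide by torsion, and identify $\Ee$ as an extension of $\Ii_{Z_1,H_1}(a_1)$ by the kernel---is genuinely different from the paper's, which instead uses the minimal $0$-regular twist to get $h^2(\Ee(-3))>0$, hence by Serre duality ($\omega_X\cong\Oo_X(-2)$) a nonzero map $u:\Ee\to\Oo_X(1)$, and then runs a short case analysis on the closed subscheme $A$ with $\Image(u)=\Ii_A(1)$, using purity to see that $u$ is an isomorphism in all cases except $A\supseteq H_i$. Your first half is sound: the kernel of $\Ee\to\Ee_{|H_1}^{\circ}$ is indeed an $\Oo_{H_2}$-module (purity of $\Ee$ kills the potential $L$-torsion), and the boundary-map argument forcing $Z_2=\emptyset$ is correct.

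The gap is in the second half, and it is not merely that the argument is left as a sketch (``should corner $|Z_1|$ to $0$''): the statement you are aiming at is false as formulated. Take $\Ee=\Ii_{C'}$ for a line $C'\subset H_2$, $C'\neq L$, meeting $L$ at a point $p$. This is aCM of pure rank one (it is an extension of $\Oo_{H_2}(-1)$ by $\Oo_{H_1}(-1)$, see the Remark following the Proposition), yet $\Ee_{|H_1}^{\circ}\cong\Ii_{p,H_1}$, so $Z_1=\{p\}\subset L$ is nonempty --- exactly the ``borderline configuration'' you hoped to exclude, and no twist removes it. What is true is only the asymmetric statement that \emph{at least one} of $Z_1$, $W$ is empty (here $W=\emptyset$ and the conclusion holds with the roles of $H_1$ and $H_2$ exchanged), and your list of constraints (Hilbert polynomials, global generation, $h^0(\Ee(-1))=0$, the $H^2$ vanishings) contains no mechanism for detecting which component is the good one or for deriving a contradiction when both $Z_1$ and $W$ are nonempty. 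There is also an unresolved tension in your normalization: the minimal $0$-regular twist need not satisfy $h^0(\Ee(-1))=0$ (e.g.\ $\Oo_X(1)$ is $0$-regular, not $(-1)$-regular, and has $h^0(\Oo_X)\neq 0$), so the two hypotheses you want to use simultaneously cannot in general be arranged at the same twist. As it stands the proof is incomplete at its essential step; the paper's duality-plus-image argument is the cleaner route precisely because it produces the correct component automatically.
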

\begin{proof}
We may assume that $\Ee$ is $0$-regular, but not $(-1)$-regular. Thus we have $h^2(\Ee(-3))>0$, which gives a nonzero map $u: \Ee \rightarrow \Oo_X(1)$. Let $\mathrm{Im}(u)=\Ii_A(1)$ for some closed subscheme $A\subsetneq X$. Since $\Ee$ is globally generated, $\Ii_A(1)$ is also globally generated. Moreover, if $A\ne \emptyset= A \cap L$, then $A$ is a zero-dimensional subscheme and $u$ is an isomorphism. But since $h^1(\Ii_A(-1))>0$, we must have $A\cap L\ne \emptyset$. Thus $A$ is one of the following:
\begin{itemize}
\item [(i)] $A=\emptyset$;
\item [(ii)] $A = L_1\cup L_2$ with $L_i$ a line of $H_i$ such that $L_1\cap L_2\ne \emptyset$,
\item [(iii)] $A=D \ne L$ for a line $D\subset H_i$ for some $i\in \{1,2\}$,
\item [(iv)] $A=L$,
\item [(v)] $A =\{p\}$ with $p\in L$,
\item [(vi)] $A$ is a connected $0$-dimensional subscheme of degree $2$ such that $Z_{\mathrm{red}}\in L$,
\item [(vii)] $A\supseteq H_i$ for some $i\in \{1,2\}$.
\end{itemize}
In each case but (vii), the surjective map $u:\Ee \rightarrow \Ii_A(1)$ is an isomorphism. In case (i) $\Ee$ is isomorphic to $\Oo_X(1)$ and so it fits into the sequence
\begin{equation}\label{seee}
0\to \Oo_{H_2}(-1) \to \Oo_X \to \Oo_{H_1} \to 0
\end{equation}
twisted by $\Oo_X(1)$. In case (ii) we get $\Ee \cong \Oo_X$. In case (iii) without loss of generality we may assume that $D\subset H_1$. Then we see from (\ref{seee}) that the ideal sheaf $\Ii_D$ fits into the sequence
$$0\to \Oo_{H_2}(-1) \to \Ii_D \to \Ii_{D, H_1} \cong \Oo_{H_1}(-1) \to 0.$$
In case (iv) we get the same extension as in (iii). In cases (v) and (vi), we have $h^1(\Ee(t))>0$ for $t<-1$ and so it is not aCM. Now in case (vii), without loss of generality we assume that $A\supset H_2$. Since $\Ii_A(1)$ is globally generated, we get $A=H_2$ scheme-theoretically and so $\Ii_A(1) \cong \Oo_{H_1}$. So we have $h^1(\mathrm{ker}(u)(t))=0$ for $t<0$. On the other hand, from $h^2(\Ee(-2))=0$ we get $h^2(\mathrm{ker}(u)(-2))=0$ and so $\mathrm{ker}(u)$ is $0$-regular, giving the other vanishing. Thus $\mathrm{ker}(u)$ is an aCM sheaf of rank one supporting on $H_2$ and so we get $\mathrm{ker}(u) \cong \Oo_{H_2}(a)$ for some $a\in \ZZ$.
\end{proof}

\begin{remark}
Without loss of generality, consider the extension (\ref{ssee}) with $i=2$ and $a=-1$. If $b>0$, then (\ref{ssee}) splits. If $b=0$, we get $\Ee \cong \Oo_X$. If $b<0$, then we get $\Ee \cong \Ii_C$ for a plane curve $C\subset H_1$ of degree $-b$. Indeed, if we apply the functor $\mathrm{Hom}_{\PP^3}(\Oo_{H_1}(b), -)$ to the standard exact sequence for $H_2\subset \PP^3$ twisted by $\Oo_{\PP^3}(-1)$, we get
\begin{align*}
0&\to \mathrm{Hom}_{\PP^3}(\Oo_{H_1}(b), \Oo_{H_2}(-1)) \to \Ext_{\PP^3}^1(\Oo_{H_1}(b), \Oo_{\PP^3}(-2)) \\
&\to \Ext_{\PP^3}^1(\Oo_{H_1}(b), \Oo_{\PP^3}(-1))\to \Ext_{\PP^3}^1(\Oo_{H_1}(b), \Oo_{H_2}(-1)) \to 0
\end{align*}
since $\Ext_{\PP^3}^2(\Oo_{H_1}(b), \Oo_{\PP^3}(-2)) \cong H^1(\Oo_{H_1}(b-2))^\vee=0$. Similarly we get
$$\Ext_{\PP^3}^1(\Oo_{H_1}(b), \Oo_{\PP^3}(-2))\cong H^2(\Oo_{H_1}(b-2))^\vee \cong H^0(\Oo_{H_1}(-b-1)),$$
which is the dimension of $\mathrm{Hom}_{\PP^3}(\Oo_{H_1}(b), \Oo_{H_2}(-1))$. Thus we get the isomorphism
$$\Ext_{\PP^3}^1(\Oo_{H_1}(b), \Oo_{H_2}(-1))\cong \Ext_{\PP^3}^1(\Oo_{H_1}(b), \Oo_{\PP^3}(-1)) \cong H^0(\Oo_{H_1}(-b)).$$
Conversely, for any plane curve $C\subset H_1$ of degree $-b$, its ideal sheaf $\Ii_C$ is an extension of $\Oo_{H_1}(-b)$ by $\Oo_{H_2}(-1)$.
\end{remark}

\begin{proposition}\label{s+1}
Let $\Ee$ be be an aCM bundle of rank two. Then $\Ee$ is a kernel sheaf of simple type.
\end{proposition}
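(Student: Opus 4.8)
The plan is to analyze $\Ee$ as an extension of the two restrictions $\Ee_{|H_1}$ and $\Ee_{|H_2}$ glued along $L$, and to force one of these restrictions to split. First I would use the exact sequence \eqref{eqa2} with $\Ee$ in place of $\Ee$, namely
\begin{equation*}
0\to \Ee \to \Ee_{|H_1}\oplus \Ee_{|H_2}\to \Ee_{|L}\to 0,
\end{equation*}
and observe that since $\Ee$ is aCM and the line bundles $\Oo_{H_i}$ are aCM on the planes, the long exact cohomology sequence shows that the restriction maps $H^0(\Ee_{|H_i}(t))\to H^0(\Ee_{|L}(t))$ control the intermediate cohomology. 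Because $\Ee$ is locally free, each $\Ee_{|H_i}$ is a rank-two vector bundle on $H_i\cong\PP^2$; restricting \eqref{eqa2} to $H_1$ gives a surjection $\Ee_{|H_1}\oplus \Ee_{|H_2|L}\to\Ee_{|L}$ whose kernel recovers $\Ee_{|H_1}$ itself up to the torsion phenomenon described after Definition \ref{yyt}, so $\Ee$ is visibly a kernel sheaf built from $\Ff_1:=\Ee_{|H_1}$ and $\Ff_2:=\Ee_{|H_2}$ with the gluing isomorphism $e:\Ee_{|L}\to\Ee_{|L}$ the identity. The content of the proposition is therefore the word \emph{simple}: I must show that at least one of $\Ee_{|H_1}$, $\Ee_{|H_2}$ is a direct sum of line bundles.

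To get the splitting of one restriction I would argue by contradiction, assuming neither $\Ee_{|H_1}$ nor $\Ee_{|H_2}$ splits. A rank-two bundle $\Ff$ on $\PP^2$ that does not split has $h^1(\Ff(t))\ne 0$ for some $t$ (this is Horrocks on $\PP^2$, equivalently the statement that non-split rank-two bundles on $\PP^2$ are \emph{not} aCM). So pick $t_i$ with $h^1(\Ee_{|H_i}(t_i))\ne 0$ for $i=1,2$. Now feed this into the cohomology of \eqref{eqa2} twisted by $\Oo_X(t)$: since $\Ee_{|L}$ is a sheaf on a line, $h^2(\Ee_{|L}(t))=0$ and $h^1(\Ee_{|L}(t))=0$ for $t\gg 0$, while the aCM hypothesis forces $H^1(\Ee(t))=0$ for all $t$. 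The piece of the long exact sequence
\begin{equation*}
H^0(\Ee_{|H_1}(t))\oplus H^0(\Ee_{|H_2}(t))\to H^0(\Ee_{|L}(t))\to H^1(\Ee(t))=0
\to H^1(\Ee_{|H_1}(t))\oplus H^1(\Ee_{|H_2}(t))\to H^1(\Ee_{|L}(t))
\end{equation*}
shows that for $t$ large (so that $H^1(\Ee_{|L}(t))=0$) we would get $H^1(\Ee_{|H_1}(t))\oplus H^1(\Ee_{|H_2}(t))=0$, which is automatic for large $t$ and gives nothing; the leverage comes at the value of $t$ where the intermediate cohomology of a restriction is nonzero. I would instead run the argument in the other direction: choose $t$ with $h^1(\Ee_{|H_1}(t))\neq 0$ and look at the segment $H^1(\Ee(t))\to H^1(\Ee_{|H_1}(t))\oplus H^1(\Ee_{|H_2}(t))\to H^1(\Ee_{|L}(t))\to H^2(\Ee(t))$. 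Using $H^1(\Ee(t))=0$ and the fact that $\Ee$ being aCM on the surface $X$ gives no constraint on $H^2$, the nonvanishing of $H^1(\Ee_{|H_1}(t))$ must be absorbed by $H^1(\Ee_{|L}(t))$; but a sharper choice of $t$ together with the surjectivity statement of Remark \ref{prem} (the maps $H^0(\Oo_{H_1}(t)\oplus\Oo_{H_2}(t))\to H^0(\Oo_L(t))$ are onto) applied after twisting by a sufficiently negative line bundle lets me compare with the split case and derive a contradiction.

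The cleanest route, and the one I would ultimately write up, is to use $0$-regularity: twist so that $\Ee$ is $0$-regular but not $(-1)$-regular, exactly as in the proof of Proposition \ref{propp}. Then $h^2(\Ee(-3))>0$, producing a nonzero map $\Ee\to\Oo_X(1)$, and the global generation from $0$-regularity constrains its image. One then chases the image and kernel, using that a rank-two subsheaf or quotient supported on a single plane $H_i$ is aCM on that plane exactly when it splits or is a twisted ideal of points (as in the rank-one analysis), to conclude that the natural map exhibiting $\Ee$ as a kernel sheaf can be arranged with $\Ff_1$ or $\Ff_2$ split. The main obstacle I anticipate is precisely the bookkeeping of \emph{torsion along $L$}: $\Ee_{|H_i}$ need not be torsion-free, so I must consistently pass to $\Ee_{|H_i}^{\circ}$ as in the remark following Definition \ref{yyt}, verify that this quotient is still aCM on $H_i\cong\PP^2$, and then invoke Horrocks on $\PP^2$ to force it to be a sum of line bundles; combined with Lemma \ref{ee1} to reconstruct $\Ee$ uniquely from the split side, this yields that $\Ee$ is a kernel sheaf of simple type.
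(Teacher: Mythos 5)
Your setup is correct and matches the paper: since $\Ee$ is locally free it sits in the Mayer--Vietoris sequence $0\to\Ee\to\Ee_{|H_1}\oplus\Ee_{|H_2}\to\Ee_{|L}\to 0$, so the whole content is to show that at least one of $\Ee_{|H_1}$, $\Ee_{|H_2}$ splits. (Your worry about torsion along $L$ is vacuous here: $\Ee$ is a bundle, so each $\Ee_{|H_i}$ is already locally free on $H_i$.) But none of your three sketches actually establishes the splitting. The first you concede ``gives nothing''; the second ends with ``a sharper choice of $t$ \dots lets me derive a contradiction'' without exhibiting that choice; the third ends with ``one then chases the image and kernel'' without the chase. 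Worse, the third sketch's plan --- verify that $\Ee_{|H_i}^{\circ}$ is aCM on $H_i\cong\PP^2$ and invoke Horrocks --- cannot work: restriction to a component does not preserve the aCM property, and if it did you would conclude that \emph{both} restrictions split, which is false. Example \ref{aa1} gives a locally free aCM sheaf of rank two with $\Ee_{|H_1}\cong TH_1(-1)$ indecomposable; only the restriction to the other plane splits.

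The missing idea is the specific way the aCM hypothesis is deployed. Twist so that $\Ee_{|L}\cong\Oo_L(c)\oplus\Oo_L$ with $c\ge 0$. The vanishing $H^1(\Ee(-c))=0$ makes $H^0(\Ee_{|H_1}(-c))\oplus H^0(\Ee_{|H_2}(-c))\to H^0(\Ee_{|L}(-c))\ne 0$ surjective, so some $\Ee_{|H_i}(-c)$ has a nonzero section; since $c_1(\Ee_{|H_i})=c$, this yields $0\to\Oo_{H_i}(c)\to\Ee_{|H_i}\to\Ii_{Z,H_i}\to 0$ with $Z$ zero-dimensional. If $Z\ne\emptyset$ then $h^1(\Ee_{|H_i}(-1))=\deg Z>0$, while $h^1(\Ee_{|L}(-1))=h^1(\Oo_L(c-1)\oplus\Oo_L(-1))=0$ forces the map $H^1(\Ee(-1))\to H^1(\Ee_{|H_1}(-1))\oplus H^1(\Ee_{|H_2}(-1))$ to be surjective, contradicting $H^1(\Ee(-1))=0$. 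Hence $Z=\emptyset$ and that restriction splits. Your second sketch (injectivity of $H^1(\Ee_{|H_1}(t))\oplus H^1(\Ee_{|H_2}(t))\hookrightarrow H^1(\Ee_{|L}(t))$ for all $t$) is pointed in a usable direction, but as stated it is only a necessary condition and you never extract the contradiction from it.
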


\begin{proof}
Since $\Ee$ is locally free, it fits into a Mayer-Vietoris exact sequence
\begin{equation}\label{eq+s0}
0 \to \Ee \to \Ee _{|H_1}\oplus \Ee _{|H_2}\to \Ee _{|L} \to 0
\end{equation}
and so it is sufficient to prove that at least one among $\Ee _{|H_1}$ and $\Ee _{|H_2}$ splits, due to Lemma \ref{rrr0} and Remark \ref{rrr1}. Up to a twist we may assume that $\Ee _{|L} \cong \Oo _L({c})\oplus \Oo _L$ for some integer $c\ge 0$. From $H^1(\Ee (-c))=0$ we see that at least one among $\Ee _{|H_1}(-c)$ and $\Ee _{|H_2}(-c)$ has a nonzero global section, say there exists a nonzero section $\sigma \in H^0(\Ee _{|H_1}(-c))$. Then $\sigma$ induces an exact sequence on $H_1$:
\begin{equation}\label{eq+s1}
0 \to \Oo _{H_1}({c})\to \Ee _{|H_1}\to  \Ii _{Z,H_1}\to 0
\end{equation}
with $Z$ a zero-dimensional scheme. If $Z=\emptyset$, then $\Ee _{|H_1}$ splits and so we get the assertion. Now assume $Z\ne \emptyset$ and so we get $h^1(H_1,\Ee _{|H_1}(-1))\ne 0$ from (\ref{eq+s1}). But we have $H^1(L,\Ee _{|L})(-1)) =0$ and so (\ref{eq+s0}) gives $H^1(\Ee (-1))\ne 0$, a contradiction.
\end{proof}

Now we pay our attention to aCM bundles on higher dimensional quadrics. Let $X_n =H_{1,n}\cup H_{2,n}\subset \PP^{n+1}$ with $n\ge 3$, be a reducible quadric hypersurface, i.e. $H_{i,n}\subset \PP^{n+1}$ are hyperplanes such that $H_{1,n}\ne H_{2,n}$. Set $L_n:=H_{1,n} \cap H_{2,n}$.

\begin{theorem}\label{s+2}
There is no indecomposable aCM bundles of rank two on $X_n$ for $n\ge 3$.
\end{theorem}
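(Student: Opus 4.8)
We want to show that every aCM bundle $\Ee$ of rank two on $X_n$ ($n\ge 3$) is decomposable. By Proposition \ref{s+1}'s analogue in higher dimension (or by a direct Mayer--Vietoris argument as in \eqref{eq+s0}), $\Ee$ fits into
\[
0\to \Ee \to \Ee_{|H_{1,n}}\oplus \Ee_{|H_{2,n}} \to \Ee_{|L_n}\to 0.
\]
Since $H_{i,n}\cong\PP^n$ and $L_n\cong\PP^{n-1}$, and $\Ee_{|H_{i,n}}$ is a rank-two bundle on $\PP^n$ with no intermediate cohomology (this follows from $\Ee$ being aCM together with the long exact sequence above, exactly as in Lemma \ref{rrr0}), Horrocks' theorem forces each $\Ee_{|H_{i,n}}$ to be a direct sum of line bundles. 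Once both restrictions split, the plan is to invoke Lemma \ref{ee1} verbatim — its proof only used that $H_1$ is a projective space, that $\Ee_{|H_1}$ splits, and that the base field is algebraically closed, all of which hold here — to conclude that $\Ee$ is isomorphic to the glued kernel sheaf determined by an isomorphism $\Ee_{|L_n}\cong\Ee_{|L_n}$; and then to show that, when both gluing data are split bundles, this kernel sheaf is itself a direct sum of line bundles on $X_n$.

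**Key steps in order.** First, establish that $\Ee_{|H_{i,n}}$ has no intermediate cohomology: from the Mayer--Vietoris sequence twisted by $\Oo_{X_n}(t)$, and using $H^i(\Ee(t))=0$ for $0<i<n$ together with $H^j(\Oo_{L_n}(t))=0$ for $0<j<n-1$, one gets $H^i(H_{i,n},\Ee_{|H_{i,n}}(t))=0$ for $0<i<n$ after a short diagram chase (the boundary maps vanish for range reasons since $n\ge 3$ gives enough room). Second, apply Horrocks on $\PP^n$ to get $\Ee_{|H_{1,n}}\cong\bigoplus_{i=1}^2\Oo_{H_{1,n}}(a_i)$ and likewise on $H_{2,n}$; the restrictions to $L_n$ must then agree as line-bundle sums, so $\{a_1,a_2\}=\{a_1',a_2'\}$ as the restriction maps $\Pic(H_{i,n})\to\Pic(L_n)$ are isomorphisms ($n\ge 3$). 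Third, apply Lemma \ref{ee1} to identify $\Ee$ with the kernel sheaf $\Kk$ obtained from these two split bundles and the gluing isomorphism $e$. Fourth, show $\Kk$ decomposes: after reordering so $a_1=a_1'$ and $a_2=a_2'$, the isomorphism $e\colon\Oo_{L_n}(a_1)\oplus\Oo_{L_n}(a_2)\to\Oo_{L_n}(a_1)\oplus\Oo_{L_n}(a_2)$ is given by a matrix whose off-diagonal entries lie in $H^0(\Oo_{L_n}(a_i-a_j))$; using the surjectivity of $H^0(\Oo_{H_{k,n}}(d))\to H^0(\Oo_{L_n}(d))$ one lifts $e$ to an automorphism of $\Oo_{X_n}(a_1)\oplus\Oo_{X_n}(a_2)$, and an automorphism of the split bundle carries the "diagonal" kernel sheaf $\Oo_{X_n}(a_1)\oplus\Oo_{X_n}(a_2)$ to $\Kk$, so $\Kk$ is split.

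**Main obstacle.** The delicate point is step four: a priori the gluing isomorphism $e$ need not be liftable to an isomorphism of the bundles on the ambient $X_n$ (as the Remark before Lemma \ref{x2} warns, the kernel sheaf genuinely depends on $e$ in general). The resolution is that decomposability is exactly the statement that $e$ is \emph{equivalent}, under the actions of $\Aut(\Ee_{|H_{1,n}})$ and $\Aut(\Ee_{|H_{2,n}})$ by pre- and post-composition, to one coming from a global automorphism — and since every entry of the transition matrix is a global section of a line bundle on $L_n\cong\PP^{n-1}$ that lifts to $H_{1,n}$ (and $H_{2,n}$), this equivalence always holds. Concretely, one writes $e = (e_{ij})$, lifts each $e_{ij}$ to a section $\tilde e_{ij}$ over $H_{1,n}$, assembles $\tilde e$ as an endomorphism of $\bigoplus\Oo_{H_{1,n}}(a_i)$, checks it is an isomorphism exactly as in the proof of Lemma \ref{ee1} (the determinant is a nonvanishing constant after the argument there), and uses it to absorb $e$ into the left-hand gluing map; the resulting kernel sheaf is the trivially-glued one, which is $\bigoplus_{i=1}^2\Oo_{X_n}(a_i)$. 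This completes the proof; all the remaining verifications are of the same routine character as those already carried out in Lemmas \ref{ee1}, \ref{rrr0}, and \ref{x2}.
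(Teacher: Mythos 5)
The decisive step in your argument---that $\Ee_{|H_{i,n}}$ has no intermediate cohomology, so that Horrocks applies---is not justified by the Mayer--Vietoris chase you describe, and this is a genuine gap. Twisting the sequence $0\to\Ee\to\Ee_{|H_{1,n}}\oplus\Ee_{|H_{2,n}}\to\Ee_{|L_n}\to 0$ and taking cohomology, the vanishing $H^i(\Ee(t))=0$ for $0<i<n$ only gives an \emph{injection}
$$H^i(\Ee_{|H_{1,n}}(t))\oplus H^i(\Ee_{|H_{2,n}}(t))\hookrightarrow H^i(\Ee_{|L_n}(t)),$$
and the group on the right is the intermediate cohomology of the \emph{unknown} rank-two bundle $\Ee_{|L_n}$ on $L_n\cong\PP^{n-1}$, not of $\Oo_{L_n}$; you have no a priori control over it (knowing it vanishes is essentially equivalent to what you are trying to prove, since $\Ee_{|L_n}\cong \Ee_{|H_{1,n}}\otimes\Oo_{L_n}$, and for $i=n-1$ it is even a top-degree cohomology group on $L_n$, so there are no ``range reasons'' available). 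The alternative exact sequence $0\to\Ee_{|H_{2,n}}(-1)\to\Ee\to\Ee_{|H_{1,n}}\to 0$ only yields isomorphisms $H^i(\Ee_{|H_{1,n}}(t))\cong H^{i+1}(\Ee_{|H_{2,n}}(t-1))$ in the range $1\le i\le n-2$, which shuffles the intermediate cohomology of the two restrictions around without killing it. So as written, step one does not go through, and without it Horrocks cannot be invoked.

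The paper closes exactly this hole by a different mechanism: restrict $\Ee$ to a $3$-dimensional linear section $V$, so that $\Ee_{|X_n\cap V}$ is an aCM bundle of rank two on a reducible quadric \emph{surface}; Proposition \ref{s+1} then produces an $i$ with $\Ee_{|H_{i,n}\cap V}$ split, and the splitting criterion \cite[Theorem 2.3.2 in Chapter II]{oss} propagates the splitting from the plane $H_{i,n}\cap V$ up to $H_{i,n}$, down to $L_n$, and (using $n\ge 3$, so that $L_n$ is a hyperplane of $H_{3-i,n}$ of dimension $\ge 2$) up to $H_{3-i,n}$. If you replace your step one by this restriction argument, the remainder of your proposal is sound, although steps three and four together amount to re-proving Lemma \ref{ee1}: once both restrictions are identified with those of $\Ff=\Oo_{X_n}(a_1)\oplus\Oo_{X_n}(a_2)$, that lemma (whose proof generalizes verbatim to $X_n$, since a nowhere-vanishing homogeneous form on $\PP^{n-1}$ is a nonzero constant) already gives $\Ee\cong\Ff$ without your explicit lifting of the gluing matrix.
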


\begin{proof}
Let $\Ee$ be an aCM bundle of rank two on $X_n$. By Lemma \ref{ee1} it is sufficient to prove the existence of integers $a, b$ such that $\Ee _{|H_{i,n}} \cong \Oo _{H_{i,n}}(a)\oplus \Oo _{H_{i,n}}(b)$ for each $i=1,2$.

Let $V\subset \PP^{n+1}$ be a $3$-dimensional linear subspace with $X_n\cap V$ a reducible quadric surface. Since $\Ee _{|V\cap X_n}$ is an aCM bundle of rank two, Proposition \ref{s+1} gives the
existence of $i\in \{1,2\}$ such that $\Ee _{|H_{i,n}\cap V}$ splits, say $\Ee _{H_{i,n}\cap V} \cong \Oo _{H_{i,n}\cap V}(a)\oplus \Oo _{H_{i,n}\cap V}(b)$. By \cite[Theorem 2.3.2 in Chapter II]{oss}, we get $\Ee _{|H_{i,n}} \cong \Oo _{H_{i,n}}(a)\oplus \Oo _{H_{i,n}}(b)$ and in particular we get $\Ee _{|L_n} \cong \Oo _{L_n}(a)\oplus \Oo _{L_n}(b)$. Since $n$ is at least three, we also get $\Ee _{|H_{3-i,n}} \cong \Oo _{H_{3-i,n}}(a)\oplus \Oo _{H_{3-i,n}}(b)$ by \cite[Theorem 2.3.2 in Chapter II]{oss}, concluding the proof.
\end{proof}

\section{Examples}
In this section we introduce two examples of non-splitting aCM sheaf of rank two on $X$ in Example \ref{aa1} and Example \ref{yy1}.

\begin{remark}\label{aa2}
Note that any extension of an aCM sheaf by another aCM sheaf is again aCM. Since the sheaves $\Oo_{H_1}$ and $\Oo_{H_2}$ are aCM, but not locally free, for each integer $r\ge 1$ there exist non-locally free pure aCM sheaves with pure dimension $2$ and with pure rank $r\ge 1$, i.e. rank $r$ at all points of $X\setminus L$.
\end{remark}

\begin{example}\label{aa1}
For a fixed point $p\in H_1\setminus L$, consider the following extension
\begin{equation}\label{eqb1}
0 \to \Oo _X \to \Ee\to \Ii _p(1)\to 0.
\end{equation}
Such $\Ee$ is uniquely determined by $p$, because $\Ext^1 (\Ii_p(1), \Oo_X) \cong H^1(\Ii_p(-1))^\vee$ is $1$-dimensional from the standard exact sequence for $p\in X$ tensored by $\Oo_X(-1)$. Since $h^1(\Oo _X) =0$ and both $\Oo_X$ and $\Ii _p(1)$ are spanned, so $\Ee$ is spanned. We also have $h^0(\Ee)=4$. Let us see now that $\Ee$ is aCM; indeed, it is Ulrich. In order to see this, let us consider the minimal $\Oo_{\PP^3}$-resolution of $\Ii_{p,X}$. Without loss of generality, let $\Ii_p=(x,z,w)$ and then we get
\begin{equation}
0\to \Oo_{\PP^3}(-2)\to\Oo_{\PP^3}(-1)^{\oplus 4} \stackrel{\mathrm{M}}{\to} \Oo_{\PP^3}^{\oplus 3}\to \Ii_{p,X}(1) \to 0,
\end{equation}
with
$$\mathrm{M}=  \left (
                                           \begin{array}{llll}
                                             y & z & w & 0\\
                                             0 & -x& 0 & w\\
                                             0 & 0 & -x & -z
                                           \end{array}
                                         \right).
$$
\noindent Now applying the Horseshoe Lemma to (\ref{eqb1}), we get a resolution
\begin{equation}
0\to\Oo_{\PP^3}(-1)^{\oplus 4}\stackrel{\mathrm{N}}{\to}\Oo_{\PP^3}^{\oplus 4}\to\Ee\to 0
\end{equation}
\noindent given by the matrix
$$\mathrm{N}=  \left (
                                           \begin{array}{llll}
                                             y & z & w & 0\\
                                             0 & -x& 0 & w\\
                                             0 & 0 & -x & -z\\
                                             0 & 0 & 0 & y
                                           \end{array}
                                         \right),
$$
\noindent namely, $\Ee$ has a presentation by a square matrix with linear entries such that $\det \mathrm{N}=(xy)^2$. This is one of the equivalent definitions of $\Ee$ being an Ulrich sheaf of rank two. Since the matrix $\mathrm{N}$ has rank two on $X$, the sheaf $\Ee$ is locally free.

From (\ref{eqb1}) we get that $\Ee$ is $0$-regular and
$$\Ee_{|H_1} \cong TH_1(-1) \text{ and } \Ee_{|H_2}\cong \Oo_{H_2}(1)\oplus \Oo_{H_2},$$
where $TH_1$ is the tangent bundle of $H_1$. In particular, we have $h^0(\Ii _{H_1}\otimes \Ee ) =1$, since $\Ii_{H_1} \cong \Oo_{H_2}(-1)$. So for each $q\in H_1\setminus L$ there is a section $\sigma$ of $\Ee$ vanishing at $q$, but not vanishing identically on $H_1$. Since $\sigma_{|L}\ne 0$, so $q$ is the only zero of $\sigma$. Hence varying $q\in H_1\setminus L$ we get a unique bundle, say $\Ee_1$, up to isomorphisms.

By applying the same argument to $H_2$, we obtain another bundle, say $\Ee_2$. We get $\Ee_1 \not \cong \Ee_2$, because they are exchanged by any automorphism of $X$ exchanging $H_1$ and $H_2$ with fixing $L$ pointwise.

Indeed, each $\Ee_i$ is the restriction of a spinor bundle on a $4$-dimensional smooth hyperquadric $Q_4\subset \PP^5$ to its intersection with $3$-dimensional linear subspace so that the intersection is a union of two planes. We prove in Proposition \ref{tty1} that the three descriptions of $\Ee_1$ and $\Ee_2$, as extensions, the matrix $\mathrm{M}$ and the restriction of the spinor bundles, give the same sheaf.
\end{example}

\begin{example}\label{yy1}
Fix two integers $k,c$ such that $0\le k<c\le 2k+2$. Let $Z\subset L$ be a zero-dimensional subscheme with $\deg (Z) =c-k$ and then we have
$$h^0(\Ii _{Z'}(c-2k-3)) = h^0(\Oo _{H_2}(c-2k-3))=h^0(\Ii _{Z}(c-2k-3))=0$$
for any subscheme $Z'\subset Z$ with colength one. Thus the Cayley-Bacharach condition is satisfied and so there exists a vector bundle $\Gg=\Gg _{c,k,Z}$ on $H_2$ fitting into the following exact sequence
\begin{equation}\label{eqyy1}
0 \to \Oo _{H_2}(k)\to \Gg \to \Ii _Z(c-k) \to 0.
\end{equation}
Since $k\ge 0$ and $\Ii _Z(c-k)$ is globally generated, so $\Gg$ is also globally generated. By tensoring (\ref{eqyy1}) with $\Oo_L$, we get $\Gg _{|L} \cong  \Oo _L({c})\oplus \Oo _L$ and the exact sequence
\begin{equation}\label{eqyy1-1}
0 \to \Oo _{L}(k)\to \Gg _{|L}\to \Ii _Z(c-k)\otimes \Oo _L \to 0,
\end{equation}
in which the first map is injective, because it is injective outside $Z$ and $\Oo _L(k)$ is locally free. For an integer $t\in \ZZ$, let us consider the following natural maps:
\begin{align*}
a_t &: H^1(\Gg(t)) \to H^1(\Ii_Z(c-k+t)),\\
b_t &: H^1(\Gg(t)) \to H^1(\Gg(t)_{|L}),\\
c_t &: H^1(\Ii_Z(c-k+t)) \to H^1(\Ii_Z(c-k+t) \otimes \Oo_L).
\end{align*}
The restriction maps $b_t$ and $c_t$ are maps from cohomology groups appearing in the long exact sequences of cohomology of (\ref{eqyy1}) and (\ref{eqyy1-1}) and we may see them as vertical maps among the vector spaces of these cohomology exact sequences of  (\ref{eqyy1}) and (\ref{eqyy1-1}). We get vertical maps between the corresponding first cohomology groups on $H_2$ and $L$. Since $h^1(\Oo _{H_2}(k+t)) =0$, so $a_t$ is injective for all $t\in \ZZ$. The sheaf $\Ii _Z(c-k+t)\otimes \Oo_L$ has a torsion subsheaf, say $\tau$, of degree $c-k$ and we get that $\left (\Ii _Z(c-k+t)\otimes \Oo_L \right)/\tau \cong \Oo _L(t)$. Since $H^1(\tau)=0$, so we may consider the map $c_t$ as a map
$$d_t : H^1(\Ii_Z(c-k+t)) \to H^1(\Oo_L(t)).$$
For any zero-dimensional subscheme $\tau \subset H_2$, let $\mathrm{Res}_L(\tau)$ denote the residual scheme of $\tau$ with respect to $L$, i.e. the closed subscheme of $H_2$ with $\Ii _\tau:\Ii _L$ as its ideal sheaf. Then we have $\deg (\tau)=\deg (\mathrm{Res}_L(\tau ) ) +\deg (L\cap \tau) $ and for each $t\in \ZZ$ we have an exact sequence of sheaves on $H_2$:
\begin{equation}\label{eqoo1.11}
0 \to \Ii _{\mathrm{Res}_L(\tau ),H_2}(t-1) \to \Ii _\tau (t) \to \Ii _{\tau \cap L ,H_2}(t)\to 0.
\end{equation}
Since $Z\subset L$ is of degree $c-k$, so we get $\mathrm{Res}_L(Z) =\emptyset$ and (\ref{eqoo1.11}) induces an exact sequence
$$0\to \Oo_{H_2}(c-k+t+1) \to \Ii_Z(c-k+t) \to \Oo_L(t) \to 0.$$
We get the injectivity of $d_t$ from $h^1(\Oo _{H_2}(c-k+t-1)) =0$. By the construction of the residual sequence, $d_t \circ a_t$ is the composition of $b_t$ and the projection to $H^1(\Oo_L(t))$, so $b_t$ is injective for all $t\in \ZZ$. By Lemma \ref{x2}, the sheaf induced by $\Oo_{H_1}(c)\oplus \Oo_{H_1}$ and $\Gg$ via (\ref{eqx1}) is aCM.
\end{example}

For a coherent sheaf $\Ee$ on $X$, we have a pair of two integers $(d_1, d_2)$ such that $d_i=c_2(\Ee_{|H_i}^{\circ})$ the $2$nd Chern class of $\Ee_{|H_i}^{\circ}$ for $i=1,2$, where $\Ee_{|H_i}^{\circ}$ is the quotient of $\Ee_{|H_i}$ by its torsion.

\begin{theorem}\label{i4}
For a fixed integer $m\ge 0$, there are triple of integers $(c,d_1, d_2)$ such that there is no algebraic scheme $W$ with $\dim (W) \le m$ with the following property satisfied:

\quad{$(\clubsuit)$} there exists a flat family $\{\Ee _w\}_{w\in W}$ of coherent sheaves of rank two on $X$ such that every aCM sheaf $\Ee$ of rank two on $X$ with
$$c_1(\Ee )=\Oo_X(c) ~\text{ and }~ c_2(\Ee _{|H_i}^{\circ}) =d_i \text{ for }i=1,2$$
is isomorphic to $\Ee _w$ for some $w\in W$.
\end{theorem}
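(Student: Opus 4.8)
The plan is to produce, for suitable $(c,d_1,d_2)$, an algebraic family of aCM sheaves of rank two on $X$ with these invariants whose dimension grows without bound, and then invoke a standard "no bounded parameter space for an unbounded family" argument. Concretely, I would take $c\ge 2$ and choose $k$ with $0\le k<c\le 2k+2$ (always possible once $c\ge 2$), and set $d$ to be the second Chern class forced on the non-split component in the third case of Theorem \ref{main}; then for each choice of a zero-dimensional subscheme $Z\subset L$ with $|Z|=k$, Example \ref{yy1} produces an aCM kernel sheaf $\Ee_{c,k,Z}$ of simple type, locally free off $L$, whose restriction invariants are exactly the prescribed $(c,d_1,d_2)$. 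The subschemes $Z\subset L\cong\PP^1$ of length $k$ form $\PP^k$, so we obtain a $k$-dimensional family; taking $c$ (hence $k$) large enough beats any fixed $m$.

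The first real step is to check that the assignment $Z\mapsto \Ee_{c,k,Z}$ is generically injective, i.e. that $\Ee_{c,k,Z}\cong \Ee_{c,k,Z'}$ forces $Z=Z'$ for $Z,Z'$ in a dense open subset of $\PP^k$. Here I would use Lemma \ref{ee1} together with Remark \ref{rrr1}: an isomorphism of the two kernel sheaves restricts to isomorphisms on each $H_i$, hence on $H_2$ carries the extension \eqref{eqyy1} defining $\Gg_{c,k,Z}$ to that defining $\Gg_{c,k,Z'}$; since the sub-line-bundle $\Oo_{H_2}(k)\hookrightarrow\Gg$ is intrinsic (it is the maximal destabilizing-type sub, pinned down as in the proof of Proposition \ref{s+1} and Lemma \ref{rrr0}), the induced quotient map identifies $\Ii_{Z,H_2}(c-k)$ with $\Ii_{Z',H_2}(c-k)$, whence $Z=Z'$. (A short argument is needed to rule out the automorphisms of $H_2$ moving $L$ off itself; but such automorphisms do not extend to $X$, and an abstract isomorphism of sheaves on $X$ restricts to an automorphism of each $H_i$ fixing $L$ setwise, so only the finitely-many-orbit issue on $\PP^1$ remains, which is harmless for dimension counting.)

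The second step is the purely formal "unboundedness" conclusion. Suppose, for contradiction, that for the chosen $(c,d_1,d_2)$ there is a scheme $W$ with $\dim W\le m$ carrying a flat family $\{\Ee_w\}_{w\in W}$ that contains, up to isomorphism, every aCM sheaf of rank two on $X$ with those invariants; in particular it contains all $\Ee_{c,k,Z}$, $Z\in\PP^k$. Choosing $c$ so that $k>m$, consider the set-theoretic map $\PP^k\dashrightarrow W$ sending $Z$ to a point $w$ with $\Ee_w\cong\Ee_{c,k,Z}$. By flatness and boundedness of $W$, one stratifies $W$ and uses that isomorphism classes in a flat family over a reduced base are constructible, so the fibers of this correspondence are constructible; since the correspondence is generically injective by the previous paragraph, a dimension count (fiber dimension plus image dimension) gives $k\le \dim W\le m$, contradicting $k>m$. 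This is the standard argument that a wild variety admits no bounded "moduli-like" parameter space, and I would phrase it with the usual care about working over the reduced structure and passing to a constructible dense subset on which all maps involved are morphisms.

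The main obstacle is the generic injectivity of $Z\mapsto \Ee_{c,k,Z}$: one must be sure that the full sheaf on $X$, not merely its restriction $\Gg_{c,k,Z}$ to $H_2$, determines $Z$, and that the freedom in choosing the gluing isomorphism $e\colon \Ff_{2|L}\to\Ff_{1|L}$ in the kernel-sheaf construction \eqref{eqx1} (flagged in the Remark after Lemma \ref{x2} and in \cite{Daoudi}) does not collapse the family. I expect this to follow because the first component $\Oo_{H_1}(c)\oplus\Oo_{H_1}$ is rigid and $\Aut$ of a split bundle acts transitively enough on gluings that the isomorphism type over $X$ still remembers $\Gg_{c,k,Z}$ up to isomorphism on $H_2$ — exactly the situation handled by Lemma \ref{ee1} — but making this precise, including the identification of the canonical sub-line-bundle of $\Ee_{|H_2}$, is where the argument needs genuine care.
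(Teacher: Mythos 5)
Your proposal is correct and follows essentially the same route as the paper: both exhibit the sheaves of Example \ref{yy1} as a family of pairwise non-isomorphic aCM sheaves parametrized by the zero-dimensional subschemes $Z\subset L\cong\PP^1$, recover $Z$ from the sheaf via the (unique) sub-line-bundle $\Oo_{H_2}(k)\hookrightarrow\Gg_{c,k,Z}$, and conclude by letting the length of $Z$ exceed $m$. The only point to watch is that the paper secures uniqueness of that sub-line-bundle by restricting to $c\le 2k$ (so that $h^0(\Gg_{c,k,Z}(-k))=1$), rather than the full range $c\le 2k+2$; since you are free to choose the parameters, this is a harmless adjustment to your argument, and note also that for recovering $\Gg_{c,k,Z}$ from $\Ee$ you only need functoriality of restriction to $H_2$ (modulo torsion), not Lemma \ref{ee1}.
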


\begin{proof}
Up to twist it is sufficient to consider algebraic families of $0$-regular, but not $(-1)$-regular aCM sheaves with the property that there exists $i\in \{1,2\}$ such that $\Ee _{|H_i}^{\circ} \cong \Oo _{H_i}({c})\oplus \Oo _{H_i}$ and so $d_i=c_2(\Ee _{|H_i}^{\circ}) =0$.

Assume $d_1=0$ and take $d_2=k(c-k)+c-k$ with $c\le 2k$ to adopt aCM sheaves in Example \ref{yy1}. Since $c\le 2k$, we have $h^0(\Gg _{c,k,Z}(-k)) =1$ and so the sheaf $\Gg _{c,k,Z}$ uniquely determines $Z$. Thus we find a family of dimension $c-k$, and so it is sufficient to take $c$ and $k$ with $c \le 2k$ and $c-k >m$.
\end{proof}


\section{Classification}

It turns out from Theorem \ref{i1} that the sheaves in Example \ref{aa1} and Example \ref{yy1} are the only possibility for indecomposable aCM kernel sheaves of simple type with rank two on $X$ up to twist.

\begin{theorem}\label{i1}
Up to twist, every non-splitting aCM kernel sheaf of simple type with pure rank two on $X$ is either as in Example \ref{aa1} or Example \ref{yy1}.
\end{theorem}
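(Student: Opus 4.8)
The plan is to start from an aCM kernel sheaf $\Ee$ of simple type with pure rank two on $X$, normalize by twist so that $\Ee$ is $0$-regular but not $(-1)$-regular, and analyze the defining sequence $0\to\Ee\to\Ee_1\oplus\Ee_2\to\Ee_{1|L}\to 0$ of Definition \ref{yyt}, where (say) $\Ee_1\cong\Oo_{H_1}(a)\oplus\Oo_{H_1}(b)$ with $a\ge b$ splits. By Remark \ref{rrr1} and Lemma \ref{rrr0}, the twist is pinned down by the condition that $\Ee(-t)$ be spanned at a general point of each $H_i$ but not for larger $t$; combined with "$0$-regular, not $(-1)$-regular", one extracts $\Ee_{1|L}\cong\Oo_L(c)\oplus\Oo_L$ for an integer $c\ge 0$ (the case $c=0$ forcing, via the section-spanning argument as in Proposition \ref{s+1} and Remark \ref{prem}, that $\Ee$ splits, so we may assume $c\ge 1$). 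From $H^1(\Ee(-c))=0$ and sequence (\ref{eqa2}), at least one $\Ee_{i|H_i}(-c)$ — which we arrange to be the split factor $\Ee_1$ — has a nonzero section, yielding on $H_1$ the exact sequence $0\to\Oo_{H_1}(c)\to\Ee_1\to\Ii_{Z,H_1}\to 0$ with $Z$ zero-dimensional. Since $\Ee_1$ splits, $Z=\emptyset$ and $\Ee_1\cong\Oo_{H_1}(c)\oplus\Oo_{H_1}$; this is condition (i) of the third case in Theorem \ref{main}.

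**Next I would** determine $\Ee_2$. Using $h^0(\Ee)$ and the fact that $\Ee$ is $0$-regular but not $(-1)$-regular — so $h^2(\Ee(-3))>0$, equivalently $\Hom(\Ee,\Oo_X(1))\ne 0$ — I would produce a nonzero map $\Ee\to\Oo_X(1)$ and, restricting to $H_2$, a nonzero section of $\Ee_2^\vee(1)$, i.e. a sequence $0\to\Oo_{H_2}(k)\to\Ee_2\to\Ii_{Z',H_2}(c-k)\to 0$ for some integer $k$ and zero-dimensional $Z'\subset H_2$, after matching first Chern classes $c_1(\Ee_2)=\Oo_{H_2}(c)$. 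The key numerical input is Lemma \ref{x2}: since $\Ee$ is aCM, $h^1(\Ee(t))=\dim\ker\{H^1(\Ee_2(t))\to H^1(\Ee_{2|L}(t))\}=0$ for all $t$, so the restriction $H^1(\Ee_2(t))\to H^1(\Ee_{2|L}(t))$ is injective for every $t$. Feeding (\ref{eqyy1}) and (\ref{eqyy1-1}) backwards — that is, comparing the long exact cohomology sequences of $0\to\Oo_{H_2}(k)\to\Ee_2\to\Ii_{Z'}(c-k)\to 0$ and its restriction to $L$ — this injectivity forces, via the residual sequence (\ref{eqoo1.11}) for $Z'$ with respect to $L$, that $\mathrm{Res}_L(Z')=\emptyset$, i.e. $Z'=:Z\subset L$, and that the numerical inequalities $0\le k$ and $c\le 2k+2$ hold (the latter from the nonvanishing of the relevant $H^1$ that would otherwise obstruct injectivity of $b_t$ at $t=-k-1$ or so). The strict inequality $c>k$ (equivalently $k<c$) comes from non-splitting: if $c\le k$ one shows $\Ee_2$, hence $\Ee$ by Lemma \ref{ee1} / Lemma \ref{rrr0}, would decompose.

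**The small-$c$ cases** need separate treatment: if $c=1$ the sequence $0\to\Oo_{H_1}(1)\to\Ee_1\to\Oo_{H_1}\to 0$ and the constraint $0\le k<1$ give $k=0$, $Z=\emptyset$, and $\Ee_2\cong\Oo_{H_2}(1)\oplus\Oo_{H_2}$ or a twisted tangent-bundle extension; tracing through, this lands exactly in Example \ref{aa1} (the Ulrich bundle $\Ee$ with $\Ee_{|H_1}\cong TH_1(-1)$), which one pins down by noting $\Ext^1(\Ii_p(1),\Oo_X)$ is one-dimensional so the extension is unique for each $p\in H_i\setminus L$. For $c\ge 2$ one gets precisely the family of Example \ref{yy1}: the data $(c,k,Z)$ with $0\le k<c\le 2k+2$ and $Z\subset L$ of degree $c-k$, and conversely Example \ref{yy1} already verified such sheaves are aCM kernel sheaves of simple type. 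Finally, Lemma \ref{ee1} guarantees that $\Ee$ is determined by its restrictions $\Ee_{|H_i}$ (since $\Ee_{|H_1}$ splits), so no extra gluing parameter appears beyond what Example \ref{yy1} records.

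**The main obstacle** I anticipate is the rigorous extraction of the sharp inequality $c\le 2k+2$ (and the Cayley–Bacharach condition making $\Ee_2$ a bundle rather than merely a sheaf): this requires running the cohomological comparison of Lemma \ref{x2} in the exact degree where injectivity first fails, and correctly identifying the residual scheme $\mathrm{Res}_L(Z)$ so that the torsion subsheaf $\tau$ in $\Ii_Z(c-k+t)\otimes\Oo_L$ of degree $c-k$ is accounted for. A secondary subtlety is ensuring the initial normalization genuinely forces $\Ee_1$ (not $\Ee_2$) to be the split factor carrying the $\Oo_{H_1}(c)\oplus\Oo_{H_1}$ shape after the correct twist — one may need to invoke Lemma \ref{rrr0} symmetrically and argue that if both $\Ee_{i|H_i}$ split then $\Ee$ decomposes, reducing to the truly indecomposable case. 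Once these cohomological bookkeeping points are in place the identification with Example \ref{aa1} or \ref{yy1} is essentially formal.
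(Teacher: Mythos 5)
Your overall strategy coincides with the paper's: normalize the twist so that $\Ee_{|L}\cong \Oo_L(c)\oplus\Oo_L$ and the split factor is $\Oo_{H_1}(c)\oplus\Oo_{H_1}$, then treat $c=0$ (trivial; Lemma \ref{rrr1+}), $c=1$ (Proposition \ref{tty1}, landing in Example \ref{aa1} via the classification of stable bundles on $\PP^2$ with $(c_1,c_2)=(1,1)$), and $c\ge 2$ (Proposition \ref{i4.00}, landing in Example \ref{yy1}), with Lemma \ref{x2} supplying the cohomological constraints on $\Ee_2$ and Lemma \ref{ee1} disposing of the gluing data. So this is not a different route; it is the paper's route with the decisive step deferred.

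That deferred step — which you correctly flag as ``the main obstacle'' — is the actual content of the proof, and your sketch of it is off in two concrete ways. First, the subsheaf $\Oo_{H_2}(k)\subset\Ee_2$ must be taken with $k$ \emph{maximal} such that $h^0(\Ee_2(-k))>0$, not produced from a nonzero map $\Ee\to\Oo_X(1)$: the maximality, i.e. $h^0(\Ee_2(-k-1))=0$, is precisely what gives $h^0(\Ii_{Z,H_2}(c-2k-1))=0$ and hence $\deg \mathrm{Res}_L(Z)\ge\binom{c-2k}{2}$, the lower bound that eliminates the regime $c>2k+2$; a $k$ coming from $\Hom(\Ee,\Oo_X(1))\ne 0$ need not have this property. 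Second, the injectivity of $b_t$ does not by itself force $\mathrm{Res}_L(Z)=\emptyset$ and $c\le 2k+2$. What Lemma \ref{x2} actually yields is the bound $h^1(\Ee_2(t))\le h^1(\Ee_{2|L}(t))$, i.e. $h^1(\Ee_2(t))\le -1-t$ for $-c-1\le t\le -2$ and $0$ otherwise, after which one still needs: (a) a separate exclusion of $k=0$ when $c\ge 2$, by comparing $\deg Z\ge\binom{c+1}{2}$ with $\deg Z\le 1+\binom{c}{2}$; (b) in the regime $c\le 2k+2$, the count $h^1(\Ii_Z)=\deg Z-1\le c-k-1$, which together with $\deg(Z\cap L)=c-k$ — a consequence of the kernel-sheaf structure $\Ee_{2|L}\cong\Oo_L(c)\oplus\Oo_L$, not of Lemma \ref{x2} — forces $Z\subset L$ of degree $c-k$, with $c=2k+2$ then excluded by the maximality of $k$; and (c) in the regime $c>2k+2$, the numerical contradiction $\binom{c-2k}{2}+c-k\le\binom{c-2k-1}{2}+k+4$. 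Until these degree counts are actually carried out, the identification with Example \ref{yy1} remains an assertion rather than a proof.
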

\begin{proof}
It follows from Propositions \ref{rrr1+}, \ref{tty1} and \ref{i4.00}.
\end{proof}

Let $\Ee$ be an aCM kernel sheaf of simple type with pure rank two on $X$, and by a twist we may assume that $\Ee_{|L}\cong \Oo_L(c)\oplus \Oo_L$ for some $c\ge 0$. If $c=0$, then $\Ee$ is trivial by the following.

\begin{lemma}\label{rrr1+}
Let $\Ee$ be an aCM sheaf of pure rank two fitting into an exact sequence
\begin{equation}\label{eqrrr1}
0 \to \Ee \stackrel{j}{\to} \Ee _1\oplus \Ee _2 \to \Ee _{1|L}\to 0
\end{equation}
with each $\Ee _i$ a vector bundle of rank two on $H_i$ and $\Ee _1\cong \Oo _{H_1}^{\oplus 2}$.
Then we have $\Ee\cong \Oo _X^{\oplus 2}$.
\end{lemma}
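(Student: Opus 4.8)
The plan is to exploit the rigidity already encoded in Lemma \ref{rrr0} together with the structure of the defining sequence. Since $\Ee$ is an aCM kernel sheaf of simple type with $\Ee_1\cong\Oo_{H_1}^{\oplus 2}$, Lemma \ref{rrr0} applied with $a=b=0$ shows that $H^0(\Ee)$ spans $\Ee$ at every point of $X\setminus L$ and that $\Ee_2$ is spanned at every point of $H_2\setminus L$. First I would pin down $\Ee_2$: because $\Ee_{1|L}\cong\Oo_L^{\oplus 2}$ and $\Ee_2$ restricts to $\Ee_{1|L}$ on $L$, we have $\Ee_{2|L}\cong\Oo_L^{\oplus 2}$; combined with global generation of $\Ee_2$ on $H_2\setminus L$ and the aCM hypothesis (which via the Mayer--Vietoris-type sequence (\ref{eqa2}) forces $h^1(H_2,\Ee_2(-1))=0$, hence $h^1(H_2,\Ee_2(t))=0$ for all $t<0$), one concludes that $\Ee_2$ is a globally generated rank-two bundle on $\PP^2$ with $c_1=0$ and $h^1(\Ee_2(-1))=0$. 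The only such bundle is $\Oo_{H_2}^{\oplus 2}$: indeed, a nonzero section of the globally generated $\Ee_2$ gives $0\to\Oo_{H_2}\to\Ee_2\to\Ii_{W,H_2}\to 0$ with $W$ zero-dimensional, and $h^1(\Ee_2(-1))=0$ forces $W=\emptyset$, so $\Ee_2$ is an extension of $\Oo_{H_2}$ by $\Oo_{H_2}$, which splits since $\Ext^1_{H_2}(\Oo_{H_2},\Oo_{H_2})=H^1(\Oo_{H_2})=0$.

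Once both $\Ee_1$ and $\Ee_2$ are identified with $\Oo_{H_i}^{\oplus 2}$, the sheaf $\Ee$ fits into
\begin{equation}\label{eqplanMV}
0\to\Ee\to\Oo_{H_1}^{\oplus 2}\oplus\Oo_{H_2}^{\oplus 2}\to\Oo_L^{\oplus 2}\to 0,
\end{equation}
where the surjection is governed by an isomorphism $e\colon\Ee_{2|L}\to\Ee_{1|L}$, i.e. by an invertible $(2\times 2)$ matrix over $H^0(\Oo_L)=\mathbf{k}$, hence by a constant matrix in $GL_2(\mathbf{k})$. Changing trivializations of $\Oo_{H_2}^{\oplus 2}$ (which one is free to do, since any automorphism of $\Oo_L^{\oplus 2}$ over $L$ lifts to an automorphism of $\Oo_{H_2}^{\oplus 2}$) we may take $e$ to be the identity matrix. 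But with $e=\mathrm{id}$ the sequence (\ref{eqplanMV}) is exactly the direct sum of two copies of the fundamental Mayer--Vietoris sequence (\ref{yyy}), whose kernel is $\Oo_X$. Therefore $\Ee\cong\Oo_X^{\oplus 2}$.

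A cleaner packaging of the last step, which avoids matrix bookkeeping, is to invoke Lemma \ref{ee1}: we have shown $\Ee_{|H_1}^{\circ}\cong\Oo_{H_1}^{\oplus 2}$ and $\Ee_{|H_2}^{\circ}\cong\Oo_{H_2}^{\oplus 2}$, and for a kernel sheaf of simple type the torsion of $\Ee_{|H_i}$ is controlled; in fact Lemma \ref{rrr0}/Remark \ref{rrr1} with $c=0$ forces $\Ee$ itself to be locally free (any torsion on $L$ would obstruct the spanning at points of $L$, or equivalently would produce $h^1(\Ee(-1))\ne 0$ through (\ref{eqa2})), so $\Ee_{|H_i}=\Ee_{|H_i}^{\circ}\cong\Oo_{H_i}^{\oplus 2}$. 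Now $\Oo_X^{\oplus 2}$ is a vector bundle whose restriction to each $H_i$ agrees with that of $\Ee$, and its restriction to $H_1$ splits, so Lemma \ref{ee1} gives $\Ee\cong\Oo_X^{\oplus 2}$ directly.

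The main obstacle I anticipate is the identification of $\Ee_2$ as the trivial bundle, specifically verifying the cohomological vanishing $h^1(H_2,\Ee_2(-1))=0$ from the aCM hypothesis on $\Ee$: one must feed (\ref{eqa2}) the input that $h^1(L,\Ee_{1|L}(-1))=h^1(L,\Oo_L(-1)^{\oplus 2})=0$ to transfer $h^1(\Ee(-1))=0$ into $h^1(H_2,\Ee_2(-1))=0$, and then argue as above. Everything else is formal: the structure of (\ref{eqx1}), the constancy of $e$, and the appeal to Lemma \ref{ee1} or to the splitting of (\ref{yyy})$^{\oplus 2}$.
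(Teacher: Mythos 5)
Your argument is correct and shares the paper's skeleton --- use Lemma \ref{rrr0} to get generation off $L$, transfer $h^1(\Ee(-1))=0$ into $h^1(H_2,\Ee_2(-1))=0$ via the defining sequence, identify $\Ee_2\cong\Oo_{H_2}^{\oplus 2}$, then conclude --- but both nontrivial sub-steps are carried out differently. For the triviality of $\Ee_2$, the paper first shows $h^0(\Ee_2(-1))=0$ by a maximal-twist argument (a section of $\Ee_2(-\delta)$ with $\delta>0$ must vanish on $L$ because $\Ee_{2|L}\cong\Oo_L^{\oplus2}$, hence lifts to $\Ee_2(-\delta-1)$), deduces that $H^0(\Ee_2)\to H^0(\Ee_{2|L})$ is an isomorphism so $h^0(\Ee_2)=2$, and concludes by a determinant argument; you instead take a section with finite zero scheme $W$ and kill $W$ via $\deg W=h^1(\Ii_{W,H_2}(-1))=h^1(\Ee_2(-1))=0$. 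Both work, but you should say \emph{general} section rather than nonzero section, and the fact that a general section does not vanish along $L$ (equivalently, that $\Ee_2$ is generated along $L$ too) should be deduced explicitly from the surjectivity of $H^0(\Ee_2)\to H^0(\Ee_{2|L})$, which you do have from $h^1(\Ee_2(-1))=0$. For the final step the paper compares Hilbert polynomials and shows the evaluation map $H^0(\Ee)\otimes\Oo_X\to\Ee$ is injective, hence an isomorphism; your normalization of the gluing datum $e$, which lies in $\End(\Oo_L^{\oplus2})=M_2(\mathbf{k})$ and hence lifts to an automorphism of $\Oo_{H_2}^{\oplus2}$, so that the defining sequence becomes $(\ref{yyy})^{\oplus2}$, is cleaner and more structural. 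The one weak point is your alternative packaging via Lemma \ref{ee1}: that lemma requires $\Ee$ to be a vector bundle, which is essentially part of what is being proved, and the parenthetical justification you offer for local freeness is not a proof; stick with the matrix argument.
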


\begin{proof}
By Lemma \ref{rrr0}, $\Ee$ is spanned outside $L$ and so $\Ee _2$ is spanned at all points of $H_2\setminus L$. Since $\Ee _1\cong \Oo _{H_1}^{\oplus 2}$, the natural restriction map $H^0(H_1,\Ee _1)\rightarrow H^0(L,\Ee _{1|L})$ is an isomorphism and so (\ref{eqrrr1}) gives an isomorphism $j_\ast : H^0(\Ee )\rightarrow H^0(H_2,\Ee _2)$. On the other hand, we have $h^1(\Ee (-1)) =h^1(L,\Ee _{1|L}(-1)) =0$. So (\ref{eqrrr1}) gives $h^1(H_2,\Ee _2(-1)) =0$ and so the restriction map $v: H^0(H_2,\Ee _2)\rightarrow H^0(L,\Ee _{2|L})$ is surjective. To see that the map $v$ is injective and so it is an isomorphism, it would be sufficient to prove that $h^0(\Ee_2(-1)) =0$. Assume $h^0(\Ee_2(-1)) >0$ and set $\delta$ to be the maximal positive integer such that $h^0(\Ee_2(-\delta)) >0$. From $\Ee_{2|L} \cong  \Oo_L^{\oplus 2}$ and $\delta>0$, we get $h^0(\Ee_2(-\delta-1))=h^0(\Ee_2(-\delta)) >0$, contradicting the definition of $\delta$. In particular, we get $h^0(\Ee _2) =2$ and so $\Ee _2\cong \Oo _{H_2}^{\oplus 2}$. From (\ref{eqrrr1}) we get $h^0(\Ee )=2$ and that $\Ee$ and $\Oo _X^{\oplus 2}$ have the same Hilbert polynomial. The map $\rho: H^0(\Ee )\otimes \Oo _X \rightarrow \Ee$ is injective, because $\Oo _X$ has no nilpotent and $\rho$ is an isomorphism at all points of $X\setminus L$. Since $\Oo _X^{\oplus 2}$ and $\Ee$ have the same Hilbert polynomial, $\rho$ is an isomorphism.
\end{proof}

Now we deal with the case $c>0$.

\begin{proposition}\label{tty1}
Let $\Ee$ be an aCM sheaf of pure rank two, fitting into the exact sequence
\begin{equation}\label{eq+a2.1}
0 \to \Ee \to \Ee _i\oplus \Ee _{3-i} \to \Oo _L({1})\oplus \Oo _L\to 0
\end{equation}
with $\Ee _i\cong  \Oo _{H_i}(1)\oplus \Oo _{H_i}$ for some $i\in \{1,2\}$. Then either $\Ee \cong \Oo _X(1)\oplus \Oo _X$ or $\Ee$ is as in Example \ref{aa1}.
\end{proposition}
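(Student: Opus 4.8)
The plan is to analyze the sheaf $\Ee$ via the two restrictions $\Ee_{|H_i}$ and $\Ee_{|H_{3-i}}$, knowing from the hypothesis that $\Ee_i := \Ee_{|H_i}^\circ \cong \Oo_{H_i}(1)\oplus\Oo_{H_i}$ and that $\Ee_{|L}\cong \Oo_L(1)\oplus\Oo_L$. First I would pin down $\Ee_{3-i} := \Ee_{|H_{3-i}}^\circ$, which is a rank-two bundle on $H_{3-i}\cong\PP^2$ with $\Ee_{3-i|L}\cong\Oo_L(1)\oplus\Oo_L$ and $c_1 = \Oo_{H_{3-i}}(1)$. By Lemma \ref{rrr0} applied with $b=0$, $\Ee(0)$ is spanned at all points of $X\setminus L$ and $\Ee_{3-i}$ is spanned at all points of $H_{3-i}\setminus L$; moreover, since $\Ee$ is aCM and is $0$-regular but not $(-1)$-regular (the normalization forced by Remark \ref{rrr1}), one has $h^1(\Ee(-1))=0$ but a nonzero section of $\Ee(0)$ giving rise to a zero-dimensional scheme. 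On $H_{3-i}$ a rank-two globally generated bundle with $c_1=1$ and a section vanishing in codimension two is either $\Oo_{H_{3-i}}(1)\oplus\Oo_{H_{3-i}}$, or $T_{H_{3-i}}(-1)$ (the tangent bundle twisted down, with $c_2=1$), depending on whether the section has a zero. I would use the vanishing $h^1(\Ee(-1))=0$ together with the Mayer–Vietoris sequence \eqref{eqa2} for $\Ee(-1)$ — as in the proof of Proposition \ref{s+1} — to control $h^1(\Ee_{3-i}(-1))$, hence $c_2(\Ee_{3-i})$.

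Next, in the split case $\Ee_{3-i}\cong\Oo_{H_{3-i}}(1)\oplus\Oo_{H_{3-i}}$ both restrictions split, and then by Lemma \ref{ee1} (with $H_1$ playing the role of the component whose restriction is a sum of line bundles) together with Lemma \ref{rrr0}–Remark \ref{rrr1}, $\Ee$ is determined by its restrictions and the gluing; comparing Hilbert polynomials and using that $\Ee$ is a kernel sheaf of simple type, I would identify $\Ee\cong\Oo_X(1)\oplus\Oo_X$ — the argument being the same Hilbert-polynomial / genericity argument as at the end of Lemma \ref{rrr1+}. In the non-split case $\Ee_{3-i}\cong T_{H_{3-i}}(-1)$, so $c_2(\Ee_{3-i})=1$. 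Here I would show $\Ee$ is locally free: the restriction to each component is a bundle, and by Lemma \ref{x2} (or directly from \eqref{eqa2}) the kernel sheaf glued from $\Oo_{H_i}(1)\oplus\Oo_{H_i}$ and $T_{H_{3-i}}(-1)$ along the isomorphism of restrictions to $L$ has torsion-free restrictions and no torsion along $L$, because the gluing isomorphism of $\Oo_L(1)\oplus\Oo_L$ with itself is generic. Then $h^0(\Ee)=4$ (from \eqref{eqa2}, since $h^0$ of each restriction is $3$ and $h^0(\Oo_L(1)\oplus\Oo_L)=3$, with surjectivity of the $H_i$-restriction map because $\Oo_{H_i}(1)\oplus\Oo_{H_i}$ is $0$-regular), and $\Ee$ is globally generated and aCM.

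To finish the non-split case I would produce the extension \eqref{eqb1}. A general section $\sigma\in H^0(\Ee)$ restricts on $H_i$ to a section of $\Oo_{H_i}(1)\oplus\Oo_{H_i}$ with no zero (generic such section), and on $H_{3-i}$ to a section of $T_{H_{3-i}}(-1)$ vanishing at a single point $q\in H_{3-i}$; choosing $\sigma$ so that its restriction to $L$ is nowhere zero forces $q\notin L$. Then $\sigma$ gives $0\to\Oo_X\to\Ee\to\Ii_{q}(1)\to 0$ with $q\in H_{3-i}\setminus L$; bijectivity of the cokernel with $\Ii_q(1)$ follows from the rank-two / $c_1=1$ / local-freeness data and the fact that the zero locus is exactly $\{q\}$. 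Since by Example \ref{aa1} such an extension is unique for each $q\in H_{3-i}\setminus L$ and all these extensions give isomorphic bundles $\Ee_{3-i}$ (the one denoted $\Ee_1$ or $\Ee_2$ there), $\Ee$ is exactly the sheaf of Example \ref{aa1}. The main obstacle I anticipate is the dichotomy step for $\Ee_{3-i}$: ruling out other values of $c_2(\Ee_{3-i})$ and showing that when it is nonzero the bundle must be precisely $T_{H_{3-i}}(-1)$ rather than some other unstable extension — this requires carefully combining the $0$-regularity of $\Ee$, the vanishing $h^1(\Ee(-1))=0$, global generation on $H_{3-i}\setminus L$ from Lemma \ref{rrr0}, and the classification of globally generated rank-two bundles on $\PP^2$ with $c_1=1$.
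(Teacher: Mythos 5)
Your proposal is correct and follows essentially the same route as the paper: restrict to the other component, use global generation off $L$ (Lemma \ref{rrr0}) together with the vanishing $h^1(\Ee_{3-i}(-1))=0$ (extracted from $h^1(\Ee(-1))=0$ and the kernel sequence) to force $\Ee_{3-i}$ to be either $\Oo_{H_{3-i}}(1)\oplus\Oo_{H_{3-i}}$ or $TH_{3-i}(-1)$, and then identify $\Ee$ accordingly. The dichotomy step you flag as the main obstacle is carried out in the paper by distinguishing the semistable case, where $h^0(\Ee_{3-i})>0$ yields $0\to\Oo_{H_{3-i}}\to\Ee_{3-i}\to\Ii_{Z}(1)\to0$ and $h^1(\Ee_{3-i}(-1))=0$ forces $\deg Z=1$ so that Van de Ven's classification gives $TH_{3-i}(-1)$, from the unstable case, where global generation off $L$ forces the destabilizing filtration to split off $\Oo_{H_{3-i}}(1)$ and one concludes $\Ee\cong\Oo_X(1)\oplus\Oo_X$ from the resulting section of $\Ee(-1)$.
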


\begin{proof}
With no loss of generality we may take $i=1$. By Lemma \ref{rrr0} the sheaf $\Ee$ is spanned at all points of $X\setminus L$ and so $\Ee_2$ is spanned at all points of $H_2\setminus L$. By the definition of kernel sheaf, we have $\Ee _{2|L} \cong \Ee _{1|L}$ and so $\Ee _{i|L} \cong \Oo _L(1)\oplus \Oo _L$. Thus the general splitting type of $\Ee _2$ is $(1,0)$ and $c_1(\Ee _2) =1$.

First assume that $\Ee _2$ is semistable and so it is stable; $c_1(\Ee _2)$ is odd. Since $h^0(\Ee _2)>0$, we get an exact sequence
$$0 \to \Oo _{H_2}\to \Ee _2\to \Ii _{Z, H_2}(1)\to 0$$
with $Z$ a nonempty zero-dimensional scheme. Thus we get $h^0(\Ee _2)\le 3$ with strict inequality unless $Z$ is a point. Since $\Ee_2$ is globally generated outside $L$, we have $h^0(\Ee_2)\ge 2$. On the other hand, from (\ref{eq+a2.1}) we have $h^1(\Ee_2(-1))=0$ and so $Z$ is a point, i.e. $c_2(\Ee _2)=1$. The classification of stable vector bundles on $\PP^2$ with $(c_1, c_2)=(1,1)$ gives $\Ee _2\cong TH_2(-1)$; see \cite{vdv}.

Now assume that $\Ee _2$ is not semistable and so it fits into an exact sequence
$$0\to \Oo _{H_2}(k)\to \Ee_2 \to \Ii _{Z', H_2}(1-k)\to 0$$
with $Z'$ a zero-dimensional scheme and $k>0$. Since $\Ee _2$ is globally generated outside $L$, we get $k=1$ and $Z'=\emptyset$. Thus we get $\Ee _2\cong \Oo _{H_2}(1)\oplus \Oo _{H_2}$. Now from (\ref{eq+a2.1}) and $h^1(\Ee(-1))=0$, we have $h^0(\Ee(-1))=1$. It gives the nonzero map $\Oo _X({1})\rightarrow \Ee$ with cokernel $\Aa$ of pure rank one and $h^0(\Aa)=1$, since $h^1(\Oo _X({1})) =0$. Since $\Ee$ is spanned at all points of $X\setminus L$, the sheaf $\Aa$ is also spanned outside $L$ and so $\Aa \cong  \Oo _X$. Thus we get $\Ee \cong \Oo _X({1})\oplus \Oo _X$.
\end{proof}

\begin{proposition}\label{i4.00}
Let $\Ee$ be an aCM sheaf of pure rank two, fitting into the exact sequence
\begin{equation}\label{eq+a2.2}
0 \to \Ee \to \Ee _i\oplus \Ee _{3-i} \to \Oo _L({c})\oplus \Oo _L\to 0\end{equation}
with $\Ee _i\cong  \Oo _{H_i}(c)\oplus \Oo _{H_i}$ for some $i\in \{1,2\}$ and $c\ge 2$. Then either $\Ee \cong \Oo _X(c)\oplus \Oo _X$ or $\Ee$ is as in Example \ref{yy1}.
\end{proposition}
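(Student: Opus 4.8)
The plan is to follow the pattern of Proposition~\ref{tty1}. Without loss of generality take $i=1$, so that $\Ee _1\cong\Oo _{H_1}(c)\oplus\Oo _{H_1}$ is split and $\Ee _2$ is a rank-two bundle on $H_2\cong\PP^2$; by the definition of a kernel sheaf $\Ee _{2|L}\cong\Ee _{1|L}\cong\Oo _L(c)\oplus\Oo _L$, so $c_1(\Ee _2)=c$. By Lemma~\ref{rrr0} the sheaf $\Ee _2$ is globally generated on $H_2\setminus L$, hence $h^0(\Ee _2)>0$; and since $\Ee _1$ splits, Lemma~\ref{x2} together with the aCM property of $\Ee$ shows that the restriction map $H^1(\Ee _2(t))\to H^1(\Ee _{2|L}(t))$ is injective for every $t\in\ZZ$; call this property $(\ast)$. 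As $H^1(\Ee _{2|L}(t))=0$ for $t\ge -1$, property $(\ast)$ forces $h^1(\Ee _2(t))=0$ for all $t\ge -1$.

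Next I would fix the largest $k\ge 0$ with $h^0(\Ee _2(-k))\ne 0$; a section realising it has no divisorial zero, so it yields an exact sequence
\[
0\to\Oo _{H_2}(k)\to\Ee _2\to\Ii _{Z,H_2}(c-k)\to 0,
\]
which I call $(\dagger)$, with $Z$ zero-dimensional. If $Z=\emptyset$, then restricting $(\dagger)$ to $L$ (the $\Tor _1$-term vanishes because $L$ is a Cartier divisor) gives a split sequence on $\PP^1$, so $\Oo _L(k)\oplus\Oo _L(c-k)\cong\Oo _L(c)\oplus\Oo _L$ forces $k\in\{0,c\}$; since $\Ext ^1$ between line bundles on $\PP^2$ vanishes, $\Ee _2\cong\Oo _{H_2}(c)\oplus\Oo _{H_2}$, and then, by \eqref{eqa2} (or Lemma~\ref{ee1}) together with the fact that every automorphism of $\Oo _L(c)\oplus\Oo _L$ lifts to $\Oo _{H_1}(c)\oplus\Oo _{H_1}$, we get $\Ee\cong\Oo _X(c)\oplus\Oo _X$. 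So from now on assume $Z\ne\emptyset$.

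The heart of the argument is to prove $Z\subset L$. Put $z=\deg Z$, $z_L=\deg(Z\cap L)$ and $Z'=\mathrm{Res}_L(Z)$, so that $\deg Z'=z-z_L$; restricting $(\dagger)$ to $L$ and using $\Ii _{Z,H_2}(c-k)\otimes\Oo _L\cong\tau\oplus\Oo _L(c-k-z_L)$ with $\tau$ torsion of length $z_L$ gives $\Ee _{2|L}\cong\Oo _L(k+z_L)\oplus\Oo _L(c-k-z_L)$, so that either (A) $z_L=c-k$ or (B) $k=z_L=0$. One then tests $(\ast)$ at twists $t$ with $k+t\ge -2$: there the connecting map $\delta _t\colon H^1(\Ii _{Z,H_2}(c-k+t))\to H^2(\Oo _{H_2}(k+t))$ of $(\dagger)$ vanishes, so $h^1(\Ee _2(t))=h^1(\Ii _{Z,H_2}(c-k+t))$, which is in turn computed from the residual sequence $0\to\Ii _{Z',H_2}(c-k+t-1)\to\Ii _{Z,H_2}(c-k+t)\to\Oo _L(c-k+t-z_L)\to 0$. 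Taking $t=-(c-k)$ in case (A) (legitimate provided $c\le 2k+2$) gives $z-1=h^1(\Ii _{Z,H_2}(0))\le h^1(\Ee _{2|L}(-(c-k)))=c-k-1$, whence $\deg Z'=z-(c-k)\le 0$ and $Z\subset L$; taking $t=-k-2$, and using that maximality of $k$ forces $h^0(\Ii _{Z',H_2}(c-2k-2))=0$ (so $\deg Z'\ge h^0(\Oo _{H_2}(c-2k-2))$), excludes $c\ge 2k+3$ in case (A); and testing at $t=-1$ and $t=-2$ excludes case (B) (there one gets $z=\binom{c+1}{2}$ from $t=-1$ and then $c\le 1$ from $t=-2$, against $c\ge 2$). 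The one genuinely technical point is choosing these twists and controlling $h^1(\Ii _{Z,H_2}(c-k+t))$ through the residual sequences; this is the cohomological computation of Example~\ref{yy1} run in reverse, and it is the main obstacle.

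Once $Z\subset L$, we are in case (A), so $z=z_L=c-k$ and hence $0\le k<c$; the maximality of $k$, that is $h^0(\Ii _{Z,H_2}(c-2k-1))=0$, together with the identity $h^0(\Ii _{Z,H_2}(c-2k-1))=\binom{c-2k}{2}$ valid for $Z\subset L$ of degree $c-k$, forces $c\le 2k+1\le 2k+2$. Then $c-2k-3<0$, so the Cayley--Bacharach hypothesis of Example~\ref{yy1} is satisfied and $\Ee _2$ is one of the bundles $\Gg _{c,k,Z}$. Finally $\Ee$ is the kernel sheaf determined by the split bundle $\Ee _1=\Oo _{H_1}(c)\oplus\Oo _{H_1}$, by $\Ee _2=\Gg _{c,k,Z}$ and by some gluing isomorphism of $\Oo _L(c)\oplus\Oo _L$; since every such automorphism lifts to $\Oo _{H_1}(c)\oplus\Oo _{H_1}$, the kernel sheaf is independent of the gluing, so $\Ee$ is precisely the sheaf of Example~\ref{yy1}.
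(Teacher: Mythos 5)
Your proposal is correct and follows essentially the same route as the paper's proof: the maximal twist $k$ with $h^0(\Ee_2(-k))\ne 0$, the extension by $\Ii_{Z,H_2}(c-k)$, the injectivity of $H^1(\Ee_2(t))\to H^1(\Ee_{2|L}(t))$ coming from Lemma~\ref{x2} and the aCM hypothesis, and the residual exact sequences forcing $\deg Z=c-k$, $Z\subset L$ and $c\le 2k+1$. The differences are cosmetic --- your (A)/(B) dichotomy repackages the paper's split into the cases $k=0$, $0<k<c$ and $k=c$, you test at $t=-k-2$ where the paper uses $t=-k-3$, and you make explicit the lifting of the gluing automorphism that the paper leaves implicit; the only small slip is the unjustified claim that $\Ee_{2|L}\cong \Oo_L(k+z_L)\oplus\Oo_L(c-k-z_L)$ (an extension of line bundles on $\PP^1$ need not split into its sub and quotient), but the dichotomy you actually use follows directly from the fact that a saturated line subbundle of $\Oo_L(c)\oplus\Oo_L$ of nonnegative degree $k+z_L$ with locally free quotient must have degree $0$ or $c$.
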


\begin{proof}
Without loss of generality we may assume $i=1$, and let $k$ be the maximal integer such that $h^0(H_2,\Ee _2(-k)) >0$. By Lemma \ref{rrr0}, $\Ee_2$ is generated by global sections outside $L$ and so we have $k\ge 0$ with an exact sequence
\begin{equation}\label{eqx3}
0 \to \Oo _{H_2}(k) \to \Ee _2 \to \Ii _{Z, H_2}(c-k)\to 0
\end{equation}
with $Z$ a zero-dimensional subscheme of $H_2$ and $\Ii _{Z, H_2}(c-k)$ globally generated outside $L$. In particular, we have $k\le c$. If $k=c$, we get $Z=\emptyset$ and so $\Ee _2 \cong \Oo _{H_2}({c})\oplus \Oo _{H_2}$. Using the same argument in the last part of proof of Proposition \ref{tty1}, we get $\Ee \cong \Oo _X({c})\oplus \Oo _X$.

\quad {(i)} If $k=0$, then we get $h^0(\Ee _2(-1))=0$ by the definition of the integer $k$. Now from the long exact sequence of cohomology of (\ref{eqx3}) twisted by $\Oo_X(-1)$, we get $\deg (Z)\ge \binom{c+1}{2}$. On the other hand, the following inequality due to Lemma \ref{x2}
$$h^1(\Ee _2(-2))\le h^1(\Ee _2(-2)_{|L}) = h^1(\Oo_L(c-2))+h^1(\Oo_L(-2))=1$$
gives $h^1(\Ii _{Z, H_2}(c-2)) \le 1$, which implies that $\deg (Z)\le 1+ \binom{c}{2}$. So the only possibility is $c=1$, a contradiction.

\quad {(ii)} Now assume $0<k<c$. Since we have $\Ee _{2|L} \cong \Oo _L({c})\oplus \Oo _L$, tensoring (\ref{eqx3}) with $\Oo_L$ gives $\deg (Z \cap L)=c-k$. Set $\phi (t):= h^1(\Ee_2 (t))$ and then by Lemma \ref{x2} with the duality we get
$$\phi (t)  \left\{
                                           \begin{array}{lll}
                                             =0, & \hbox{if $ t\ge -1$;}\\
                                             \le -1-t, & \hbox{if $-c-1 \le t\le -2$;}\\
                                             =0, & \hbox{if $t\le -c-2$.}\\
                                           \end{array}
                                         \right.$$
Twisting (\ref{eqx3}) with $\Oo _{H_2}(-k-e)$ for $e\in \{0,1,2\}$, we get
\begin{equation}
h^1(\Ii _{Z, H_2}(c-2k-e))=   h^1(\Ee_2 (-k-e))  \le k+e-1
\end{equation}
by Lemma \ref{x2}. The same inequality holds also for $e=3$, due to
$$h^1(\Ii_{Z, H_2}(c-2k-3)) \le h^1(\Ee_2 (-k-3))+1 \le k+4.$$
If $c\le 2k+2$, then twisting (\ref{eqx3}) with $\Oo _{H_2}(k-c)$ gives
$$h^1(\Ee_2 (k-c)) = h^1(\Ii _Z) = \deg (Z)-1$$
and so we get $\deg (Z) \le c-k$. In particular we have $\deg (Z) = c-k$ and $Z\subset L$. If $c =2k+2$, then we have $h^0(\Ee_2 (-k-1)) =h^0(\Ii _Z(1)) =1$, contradicting the definition of the integer $k$. If $c\le 2k+1$, then this is as in Example \ref{yy1}.

If $c>2k+2$, then $\Ee_2$ is stable. Since $h^0(\Ee_2 (-k-1)) =0$, so (\ref{eqx3}) gives $h^0(\Ii _{Z, H_2}(c-2k-1)) =0$. Recall that for any zero-dimensional subscheme $\tau \subset H_2$ we get an exact sequence (\ref{eqoo1.11}) of sheaves on $H_2$, where $\mathrm{Res}_L(\tau)$ denotes the residual scheme of $\tau$ with respect to $L$. Since $\deg (Z\cap L) =c-k$, we have $\deg (\mathrm{Res}_L(Z)) =\deg (Z) -c+k$. From (\ref{eqoo1.11}) with $t=c-2k-1$, we get $h^0(\Ii _{\mathrm{Res}_L(Z)}(c-2k-2)) =0$ and so $\deg (\mathrm{Res}_L(Z))\ge \binom{c-2k}{2}$. Thus we get $\deg (Z)\ge \binom{c-2k}{2}+c-k$. On the other hand we also have $\deg (Z)\le \binom{c-2k-1}{2} +k+4$, since $h^1(\Ii _{Z, H_2}(c-2k-3))  \le k+4$. Combining these two inequalities, we get
$$\binom{c-2k}{2}+c-k \le \binom{c-2k-1}{2} +k+4$$
and it implies $c\le 2k+2$, a contradiction.
\end{proof}



\begin{corollary}\label{Ulrich}
The sheaves in Example \ref{aa1} are the only two Ulrich kernel sheaves of simple type on $X$ with rank two, up to twist.
\end{corollary}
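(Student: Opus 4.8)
The plan is to feed the classification of Theorem~\ref{i1} into a Hilbert polynomial computation. By Theorem~\ref{i1}, and by the proofs of Lemma~\ref{rrr1+} and Propositions~\ref{tty1} and~\ref{i4.00}, where the decomposable cases are identified (up to twist) with the sheaves $\Oo_X(c)\oplus\Oo_X$, $c\ge 0$, every aCM kernel sheaf of simple type with pure rank two on $X$ is, up to twist, one of: a direct sum $\Oo_X(c)\oplus\Oo_X$ with $c\ge 0$; one of the two bundles of Example~\ref{aa1}; or a sheaf as in Example~\ref{yy1}, where we may assume $c\ge 2$. The sheaves of Example~\ref{aa1} are shown there to be Ulrich and to be pairwise non-isomorphic, so it remains only to check that no sheaf in the other two families admits an Ulrich twist.

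I will use the standard fact that a rank two Ulrich sheaf on $X$ --- which has degree and dimension both equal to $2$ --- has Hilbert polynomial $2(t+s+1)(t+s+2)$ for the appropriate normalizing twist $s$ (an Ulrich sheaf has a linear free resolution over the homogeneous coordinate ring of $\PP^3$). For a direct sum $\Oo_X(a)\oplus\Oo_X(b)$, the condition $h^0\big((\Oo_X(a)\oplus\Oo_X(b))(-1)\big)=0$ forces $a\le 0$ and $b\le 0$, and then $h^0\big(\Oo_X(a)\oplus\Oo_X(b)\big)\le 2<4=\deg(X)\cdot\mathrm{rank}$, so no direct sum of line bundles on $X$ is Ulrich.

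For the sheaf $\Ee$ of Example~\ref{yy1}, tensoring its defining sequence \eqref{eqx1} by $\Oo_X(t)$ and computing $\chi(\Gg(t))$ from \eqref{eqyy1} will give
\begin{equation*}
\chi(\Ee(t))=2t^{2}+(2c+4)\,t+C,\qquad C=\tfrac12\big(c^{2}+k^{2}+(c-k)^{2}\big)+c+k+2 .
\end{equation*}
Imposing $\chi(\Ee(t+s))=2(t+1)(t+2)$: the coefficient of $t$ forces $s=\tfrac{1-c}{2}$, so $c$ must be odd, and then equality of the constant terms, after substituting $\deg Z=c-k$, collapses to $k^{2}+(c-k-1)^{2}=0$. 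The only solution is $k=0$, $c=1$, which lies outside the range $c\ge 2$. Hence no sheaf of Example~\ref{yy1} with $c\ge 2$ is Ulrich, and, up to twist, the two bundles of Example~\ref{aa1} are the only Ulrich kernel sheaves of simple type of rank two on $X$.

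The only genuine computation is the displayed Euler characteristic, and the main --- in fact, only --- obstacle is getting that bookkeeping right; everything else is immediate from the classification already established. Alternatively, one can avoid quoting the ``Ulrich $\Rightarrow$ linear resolution'' fact and argue directly from \eqref{eqx1} and Lemma~\ref{x2} that for $c\ge 2$ no twist of $\Ee$ simultaneously vanishes in $H^0$ in degree $-1$ and has exactly four sections in degree $0$, but that route is longer.
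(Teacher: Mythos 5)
Your proposal is correct, but the decisive step is handled by a genuinely different method from the paper's. For the split sheaves and for Example \ref{aa1} you and the paper agree. For the family of Example \ref{yy1} the paper argues directly from its Definition \ref{ulr}: writing $\Ee \cong \Ff(-t+1)$, it combines $h^0(\Ee(-1))=0$ with the kernel sequence to get $h^0(\Ff(-t)_{|H_1}^{\circ})+h^0(\Ff(-t)_{|H_2}^{\circ})=h^0(\Ff(-t)_{|L}^{\circ})$, and then runs a case analysis on $t$ versus $c$ (the ranges $t\le c-1$, $t>c$ and $t=c$ each contradicting $h^0(\Ee)=4$). You instead invoke the characterization of Ulrich sheaves by their Hilbert polynomial $\deg(X)\,\mathrm{rank}(\Ee)\binom{t+\dim X}{\dim X}$ and compute $\chi(\Ee(t))$ from (\ref{eqx1}) and (\ref{eqyy1}). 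I checked the bookkeeping and it is right: one gets
$$\chi(\Ee(t))=2t^{2}+(2c+4)t+\tfrac12\bigl(c^{2}+k^{2}+(c-k)^{2}\bigr)+c+k+2,$$
the linear coefficient forces $c$ odd, and the constant term then collapses to $k^{2}+(c-k-1)^{2}=0$, whose unique solution $(c,k)=(1,0)$ lies outside the range $c\ge 2$ (and in fact lands exactly on the Example \ref{aa1} bundles, a pleasant consistency check). As for what each approach buys: the paper's argument is self-contained, using only its own definition of Ulrich and the restriction sequences already in play, but needs a three-way case division on the twist; yours disposes of all twists in a single Euler-characteristic computation, at the cost of importing the standard but external fact that a maximally generated MCM module has a linear resolution and hence the stated Hilbert function --- if you take this route you should cite that equivalence explicitly (e.g. from \cite{ESW} or the Ulrich-module literature) rather than leave it as a parenthetical.
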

\begin{proof}
Let $\Ee$ be a sheaf satisfying the conditions in the assertion and then we have $h^0(\Ee)=4$ by definition. It is clear that the splitting vector bundles are not Ulrich. In Example \ref{aa1} the two sheaves $\Ee_1$ and $\Ee_2$ were shown to be Ulrich.

Assume now that $\Ee$ is a twist of a sheaf $\Ff$ as in Example \ref{yy1}, say $\Ee \cong \Ff(-t+1)$ for some $t\in \ZZ$. Since $\Ff$ is aCM and we have $h^0(\Ee(-1))=0$, so we have
\begin{equation}\label{jk}
h^0(\Ff(-t)_{|H_1}^{\circ})+ h^0(\Ff(-t)_{|H_2}^{\circ})=h^0(\Ff(-t)_{|L}^{\circ}).
\end{equation}
Without loss of generality let us assume $\Ff_{|H_1}^{\circ} \cong \Oo_{H_1}(c)\oplus \Oo_{H_1}$ with $\Oo_X(c)\cong \det (\Ff)$. Then $\Ff_{|H_2}^{\circ}$ fits into (\ref{eqyy1}).

Note that $h^0(\Ff(-t)_{|H_1}^{\circ})=h^0(\Oo_{H_1}(c-t))={c-t+2 \choose 2}$ and it is greater than $h^0(\Ff(-t)_{|L}^{\circ})=c-t+1$ for $t\le c-1$, in which case (\ref{jk}) cannot hold. If $t>c$, then we have $h^0(\Ee)=0$ from (\ref{eqa2}).

Finally assume $t=c$ and then we have $h^0(\Ff(-c))=0$. Since $h^0(\Ee)=h^0(\Ff(-c+1))=4$, we have $h^0(\Ff(-c+1)_{|H_2}^{\circ})=3$. But from the sequence (\ref{eqyy1}) twisted by $\Oo_{H_2}(-c+1)$:
$$0\to \Oo_{H_2}(k-c+1) \to \Ff(-c+1)_{|H_2}^{\circ}\to \Ii_{Z, H_2}(-k+1) \to 0,$$
we get $h^0(\Oo_{H_2}(k-c+1))\le 1$ and $h^0(\Ii_{Z, H_2}(-k+1))\le 1$. In particular we have $h^0(\Ff(-c+1)_{|H_2}^{\circ})\le 2$, a contradiction.
\end{proof}

Now assume $n\ge 3$ and let $\Ee$ be an aCM sheaf on $X_n$ with pure rank $2$. Similary as in the case of $X=X_2$, we say that $\Ee$ is a kernel sheaf of simple type if it fits into an exact sequence
$$0 \to \Ee \to \Ee _1\oplus \Ee_2\to \Ee _{1|L_n}\to 0$$
with $L_n:=H_{1,n} \cap H_{2,n}$ and $\Ee _i$ a vector bundle of rank two on $H_{i,n}$ such that at least one of the two bundles $\Ee_1$ and $\Ee_2$ splits.

\begin{proposition}\label{b1}
Every aCM kernel sheaf of simple type on $X_n$ of rank two for $n\ge 3$ is decomposable.
\end{proposition}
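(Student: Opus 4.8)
The plan is to show that \emph{both} vector bundles $\Ee_1$ and $\Ee_2$ in the exact sequence defining $\Ee$ as a kernel sheaf of simple type,
\[
0 \to \Ee \to \Ee_1 \oplus \Ee_2 \to \Ee_{1|L_n} \to 0,
\]
are direct sums of line bundles on $H_{i,n}\cong\PP^n$, and then to identify $\Ee$ with a direct sum of two line bundles on $X_n$. First note that $\Ee$ is the kernel of a surjection of vector bundles whose restriction to $L_n$ is an isomorphism; indeed the isomorphism $\Ee_{2|L_n}\cong\Ee_{1|L_n}$ is part of the kernel-sheaf structure. A local computation (the local rings of $X_n$ along $L_n$ are the fibre products of those of $H_{1,n}$ and $H_{2,n}$ over those of $L_n$, so $\Ee$ is locally the Mayer--Vietoris gluing of $\Ee_1$ and $\Ee_2$) shows that $\Ee$ is locally free of rank two; in particular $\Ee_{|H_{i,n}}\cong\Ee_i$ for $i=1,2$. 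By the definition of simple type we may assume $\Ee_1$ is a direct sum of line bundles on $H_{1,n}$.

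The main step is then immediate. Since $\Ee_1$ splits, its restriction $\Ee_{1|L_n}$ to the hyperplane $L_n\subset H_{1,n}$ splits, hence so does $\Ee_{2|L_n}$ via the isomorphism $\Ee_{1|L_n}\cong\Ee_{2|L_n}$; thus the restriction of the rank-two bundle $\Ee_2$ to the hyperplane $L_n\subset H_{2,n}\cong\PP^n$ splits. As $n\ge3$ we have $\dim L_n=n-1\ge2$, so, restricting further to a $2$-plane contained in $L_n$, \cite[Theorem 2.3.2 in Chapter II]{oss} applies just as in the proof of Theorem \ref{s+2} and gives that $\Ee_2$ is a direct sum of line bundles on $H_{2,n}$. (If one prefers to avoid that citation: the splitting of $\Ee_{2|L_n}$ forces, through the cohomology of $0\to\Ee_2(t-1)\to\Ee_2(t)\to\Ee_{2|L_n}(t)\to0$, the vanishing $h^i(\Ee_2(t))=0$ for $1\le i\le n-2$ and all $t\in\ZZ$; since $\Ee_2$ has rank two, $\Ee_2^\vee\cong\Ee_2\otimes(\det\Ee_2)^{-1}$, so Serre duality on $\PP^n$ gives $h^{n-1}(\Ee_2(t))=0$ as well, whence $\Ee_2$ has no intermediate cohomology and splits by Horrocks' criterion.)

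To conclude, write $\Ee_1\cong\Oo_{H_{1,n}}(a_1)\oplus\Oo_{H_{1,n}}(a_2)$; the isomorphism $\Ee_{1|L_n}\cong\Ee_{2|L_n}$ forces $\Ee_2\cong\Oo_{H_{2,n}}(a_1)\oplus\Oo_{H_{2,n}}(a_2)$. Setting $\Ff:=\Oo_{X_n}(a_1)\oplus\Oo_{X_n}(a_2)$, we have $\Ff_{|H_{i,n}}\cong\Ee_i\cong\Ee_{|H_{i,n}}$ for $i=1,2$, and $\Ee_{|H_{1,n}}$ is a direct sum of line bundles. Lemma \ref{ee1}, which applies equally to $X_n$ (as already used in the proof of Theorem \ref{s+2}), then yields $\Ee\cong\Ff=\Oo_{X_n}(a_1)\oplus\Oo_{X_n}(a_2)$, which is decomposable. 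Note that the aCM hypothesis is not actually needed for this argument.

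The one substantive point is the passage from the splitting of $\Ee_{2|L_n}$ over the hyperplane $L_n$ to the splitting of $\Ee_2$ over $H_{2,n}$, and this is exactly where the hypothesis $n\ge3$ is used: for $n=2$ the restriction of $\Ee_2$ to the line $L_2$ always splits and carries no information, which is precisely why the surface case genuinely yields the extra indecomposable sheaves of Examples \ref{aa1} and \ref{yy1}. The identification of the kernel sheaf with a split sheaf in the last step is routine once local freeness of $\Ee$ is known, and may in any case be replaced by a direct appeal to Lemma \ref{ee1}.
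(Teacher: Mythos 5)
Your proof is correct, but it takes a genuinely different route from the paper's. The paper restricts $\Ee$ to a general $3$-dimensional linear section to reduce to the surface case, invokes the full classification there (Theorem \ref{i1}), and then runs a zero-locus argument (a lifted section of $\Ee$ whose zero scheme, having pure codimension two, must meet the hyperplane $L_n$ once $n\ge 3$) to force $\Ee_2$ to split, finishing with a depth and Hilbert-function argument to identify $\Ee$ with $\Oo_{X_n}(c)\oplus \Oo_{X_n}$. You bypass the surface classification entirely: since $\Ee_{2|L_n}\cong \Ee_{1|L_n}$ splits and $\dim L_n=n-1\ge 2$, the hyperplane-restriction splitting criterion already forces $\Ee_2$ to split, and Lemma \ref{ee1} (valid on $X_n$, as the paper itself uses it in Theorem \ref{s+2}) then identifies $\Ee$ with a sum of line bundles. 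This is shorter, and your observation that the aCM hypothesis is not needed is a genuine strengthening; in effect you reduce Proposition \ref{b1} to the same two ingredients as Theorem \ref{s+2}. The one load-bearing point is the local freeness of $\Ee$. Your fibre-product computation is valid provided the defining surjection $\Ee_1\oplus\Ee_2\to \Ee_{1|L_n}$ is the Mayer--Vietoris one built from the restriction maps and a fixed isomorphism $e:\Ee_{2|L_n}\to \Ee_{1|L_n}$, as in Definition \ref{yyt}; under a looser reading of ``fits into an exact sequence'' (an arbitrary surjection onto $\Ee_{1|L_n}$) local freeness can fail, and the paper's proof is deliberately written so as not to assume it, consistent with the remark after Definition \ref{yyt} that $\Kk_{|H_i}$ may acquire torsion. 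Since the paper's own proof of Proposition \ref{b1} also uses $\Ee_{1|L_n}\cong \Ee_{2|L_n}$, you are entitled to the same reading, so this is an interpretive caveat rather than a gap; it would still be worth spelling out the local computation, namely that $e$ lifts near each point $p\in L_n$ to a matrix in $\mathrm{GL}_2(\Oo_{H_{2,n},p})$, which exhibits $\Ee$ locally as $\bigl(\Oo_{H_{1,n},p}\times_{\Oo_{L_n,p}}\Oo_{H_{2,n},p}\bigr)^{\oplus 2}\cong \Oo_{X_n,p}^{\oplus 2}$.
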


\begin{proof}
Let $\Ee$ be an indecomposable aCM kernel sheaf of simple type of rank two on $X_n$. Up to shift and an exchange of the two components $H_{1,n}$ and $H_{2,n}$ of $X_n$, we may assume that $\Ee _1\cong \Oo _{H_{1,n}}(c)\cup \Oo _{H_{1,n}}$ for some $c\ge 0$.
Let $V\subset \PP^{n+1}$ be a $3$-dimensional linear subspace such that $X:= X_n\cap V$ is a rank two quadric of $V$, i.e. assume $V\nsubseteq X_n$ and $H_{1,n}\cap V \ne H_{2,n}\cap V$. Set $\Ff := \Ee _{|X}$.

Since every germ of $\Ee$ has depth $n$, every germ of $\Ff$ has depth $2$ and the restriction of the kernel sequence defining $\Ee$ is an exact sequence with bundles
$$\Ff_1:= \Ee_{1|H_{1,n}\cap V} \cong \Oo _{H_1}(c)\oplus \Oo _{H_1} \text{ and } \Ff _2 := \Ee _{2|H_{2,n}\cap V}.$$
Since $\Ee$ is aCM, $\Ff$ is also aCM. Since $\Ee$ is indecomposable, we can also obtain that $\Ff$ is indecomposable. By Theorem \ref{i1} either $\Ff$ is as in Example \ref{aa1} with $c=1$, or $c\ge 2$ and there is an integer $k$ with $0<k<c$ and $\Ff$ is as in Example \ref{yy1}.

By Lemma \ref{rrr0} $\Ff$ is globally generated at all points of $X\setminus L$. Take a general section $s\in H^0(\Ff)$. Since $\Ee$ is aCM, it lifts to a nonzero section $\sigma \in H^0(\Ee)$ so that its zeros $\Sigma$ has pure codimension two if it is not empty. Since $\Ee$ does not split, we get that $\Sigma$ is non-empty. Since $n\ge 3$, we have $\Sigma \cap H_{1,n}\cap H_{2,n} \ne \emptyset$ and so $\sigma$ induces a nonzero section $\sigma _1\in H^0(H_{1,n},\Ee _1)$. From $\Ee _1 \cong \Oo _{H_{1,n}}(c)\oplus  \Oo_{H_{1,n}}$ we see that $\sigma _1$ has no zero and so its restriction to $\Ee _{1|H_{1,n}\cap H_{2,n}}$ also has no zero. Let $\sigma_2\in H^0(H_{2,n},\Ee _2)$ be the section induced by $\sigma$. Since $(\sigma _1,\sigma _2)$ are induced by $\sigma$, the kernel sequence of $\Ee$ shows that $\sigma _{1|H_{1,n}\cap H_{2,n}} =\sigma _{2|H_{1,n}\cap H_{2,n}}$ and so $\sigma _2$ has no zero in the hyperplane $H_{1,n}\cap H_{2,n}$ of $H_{2,n}$. Hence $\sigma _2$ has no zero and so $\Ee _2$ splits. Since $\Ee _{1|L_n} \cong \Ee _{2|L_n}$, our aCM sheaf $\Ee$ fits into an exact sequence
\begin{equation}\label{eqer1}
0 \to \Ee \to \Oo _{H_{1,n}}({c})\oplus \Oo _{H_{1,n}}\oplus \Oo _{H_{2,n}}({c})\oplus \Oo _{H_{2,n}}
\to \Oo _{L_n}({c})\oplus \Oo _{L_n}\to 0.
\end{equation}

First assume $c>0$. Since $\Ee$ is aCM, (\ref{eqer1}) gives $h^0(\Ee (-c))=1$ and that the nonzero map $j: \Oo _{x_n}(c) \rightarrow \Oo _{X_n}$ has locally free rank one cokernel at all points of $X_n\setminus L_n$. Recall that any pure sheaf has an Harder-Narasimhan filtration; see \cite[Theorem 1.3.4]{hl}. Since each stalk of $\Ee$ has depth $n$, we see that $j$ is injective and that $\Ee /j(\Oo _{X_n})$ is pure. Since $\Ee$ is aCM, (\ref{eqer1}) gives $h^0(\Ee ) =h^0(\Oo _{X_n}(c)) +1$. Since $j(\Oo _{X_n}(c))$ is aCM, we get $h^0(\Ee/j(\Oo _{X_n})) =1$ and that a nonzero section of $\Ee/j(\Oo _{X_n})$ gives an injective map $u: \Oo _{X_n}\rightarrow \Ee/j(\Oo _{X_n})$, which is an isomorphism outside $L_n$. By (\ref{eqer1}) the sheaves $\Oo _{X_n}$ and $\Ee/j(\Oo _{X_n})$ have the same Hilbert function. Since $u$ is injective we get that $u$ is an isomorphism. Hence $\Ee \cong \Oo _{X_n}({c})\oplus \Oo _{X_n}$.

Now assume $c=0$. From (\ref{eqer1}) we get $h^0(\Ee )=2$ and that the natural map $v: H^0(\Ee )\otimes \Oo _{X_n}\rightarrow \Ee$ is an isomorphism at a general point of $H_{1,n}$ and at a general point of $H_{2,n}$. Thus $v$ is injective with $\mathrm{coker}(v)$ supported on $L_n$. Since (\ref{eqer1}) implies that $\Ee$ and $\Oo_{X_n}^{\oplus 2}$ have the same Hilbert function, $v$ is an isomorphism.
\end{proof}

\providecommand{\bysame}{\leavevmode\hbox to3em{\hrulefill}\thinspace}
\providecommand{\MR}{\relax\ifhmode\unskip\space\fi MR }
\providecommand{\MRhref}[2]{%
  \href{http://www.ams.org/mathscinet-getitem?mr=#1}{#2}
}
\providecommand{\href}[2]{#2}

\end{document}